\newtheorem{theorem}{Theorem}
\newtheorem*{claim}{Claim}
\newtheorem*{thmmain}{Theorem}
\newtheorem{lemma}[theorem]{Lemma}
\newtheorem{proposition}[theorem]{Proposition}
\newtheorem{corollary}[theorem]{Corollary}
\theoremstyle{definition}
\theoremstyle{remark}
\newtheorem{remark}[theorem]{Remark}
\numberwithin{equation}{section}
\begin{document}

\title[]
 {A unified flow approach to smooth, even $L_p$-Minkowski problems}

\author[P. Bryan]{Paul Bryan}
\address{Department of Mathematics, Macquarie University
NSW 2109, Australia}
\email{paul.bryan@mq.edu.au}
\author[M.N. Ivaki]{Mohammad N. Ivaki}
\address{Department of Mathematics, University of Toronto, Ontario,
M5S 2E4, Canada}
\email{m.ivaki@utoronto.ca}
\author[J. Scheuer]{Julian Scheuer}
\address{Albert-Ludwigs-Universit\"{a}t,
Mathematisches Institut, Ernst-Zermelo-Str. 1, 79104
Freiburg, Germany}
\email{julian.scheuer@math.uni-freiburg.de}
\dedicatory{}
\subjclass[2010]{}
\keywords{}

\begin{abstract}
We study long-time existence and asymptotic behavior for a class of anisotropic, expanding curvature flows. For this we adapt new curvature estimates, which were developed by Guan, Ren and Wang to treat some stationary prescribed curvature problems. As an application we give a unified flow approach to the existence of smooth, even $L_p$-Minkowski problems in $\mathbb{R}^{n+1}$ for $p>-n-1.$
\end{abstract}

\maketitle
\section{Introduction}
Consider a smooth, closed, strictly convex hypersurface $M_0$ in Euclidean space $\mathbb{R}^{n+1}$, $n\ge 2,$ given by a smooth embedding $F_0:M\to \mathbb{R}^{n+1}.$ Suppose the origin is in the interior of the region enclosed by $M_0.$ We study the long-time behavior of a family of hypersurfaces $\{M_t\}$ given by smooth maps $F:M\times [0,T)\to \mathbb{R}^{n+1}$ satisfying the initial value problem
\begin{equation}\label{e: flow0}
 \partial_{t}F(x,t)=\varphi(\nu(x,t))\frac{(F(x,t)\cdot \nu (x,t))^{2-p}}{\mathcal{K}(x,t)} \nu(x,t),\quad
 F(\cdot,0)=F_{0}(\cdot).
\end{equation}
Here $\mathcal{K}(\cdot,t)$ and $\nu(\cdot,t)$ are the Gauss curvature and the outer unit normal vector of $M_t=F(M,t)$ and $\varphi$ is a positive, smooth function on $\mathbb{S}^{n}$. Furthermore, $T$ is the maximal time for which the solution exists.

For $p=2,~\varphi\equiv1$, flow (\ref{e: flow0}) was studied by Schn\"{u}rer \cite{Oliver 2006} in $\mathbb{R}^3$ and by Gerhardt \cite{Gerhardt 2014} in higher dimensions. Both works rely on the reflection principle of Chow and Gulliver \cite{Bennett Chow and Robert Gulliver 1996} and McCoy \cite{James A. McCoy 2003}. Their result is as follows: the volume-normalized flow evolves any $M_0$ in the $C^{\infty}$-topology to an origin-centered sphere. For $p>2$ and $\varphi\equiv1$  it follows from Chow-Gulliver \cite[Theorem 3.1]{Bennett Chow and Robert Gulliver 1996} (see also Tsai \cite[Example 1]{Tsai 2005}) that (\ref{e: flow0}) evolves $M_0$, after rescaling to fixed volume, in the $C^{1}$-topology to an origin-centered sphere. We refer the reader to the paper \cite{Ivaki2014-gauss} regarding a rather comprehensive list of previous works on this curvature flow. In particular, in either case $\varphi\neq 1$ or $ \varphi\equiv1, -n-1<p<2$, we are not aware of any result in the literature on the asymptotic behavior of the flow. The following theorem was proved in \cite{Ivaki2014-gauss} regarding the case $p=-n-1,~\varphi\equiv1$ (in this case the flow belongs to a family of centro-affine normal flows introduced by Stancu in \cite{Alina 2012}).

Let us set write $B$ for the unit ball of $\mathbb{R}^{n+1}$ and put
\[\tilde{K_t}:=(V(B)/V(K_t))^{1/(n+1)}K_t,\] where $K_t$ denotes the convex body enclosed by $M_t$ and $V(\cdot)$ is the $(n+1)$-dimensional Lebesgue measure.

\begin{thmmain}[\cite{Ivaki2014-gauss}]
Let $n\ge2,$ $p=-n-1,~\varphi\equiv1$ and suppose $K_0$ has its Santal\'{o} point at the origin, i.e.,
$$\int_{\mathbb{S}^{n}}\frac{u}{h_{K_0}(u)^{n+2}}d\sigma(u)=0.$$ Then there exists a unique solution $\{M_t\}$ of flow (\ref{e: flow0}), such that $\tilde{M}_t$ converges in $C^{\infty}$ to an origin-centered ellipsoid.
\end{thmmain}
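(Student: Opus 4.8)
The plan is to rewrite \eqref{e: flow0} (with $p=-n-1$, $\varphi\equiv1$) as a scalar parabolic equation for the support function $h(\cdot,t)$ of $K_t$, run the standard long-time-existence machinery up to the natural symmetry group, and force convergence with a monotone affine functional. Parametrising $M_t$ by the inverse Gauss map gives $F\cdot\nu=h$ and $1/\mathcal{K}=\det(\bar\nabla^2 h+h\,\mathrm{id})=:\det(r_{ij})$, so (as $2-p=n+3$) the flow becomes
\begin{equation*}
\partial_t h=h^{\,n+3}\det(r_{ij})\quad\text{on }\mathbb{S}^n\times[0,T),\qquad h(\cdot,0)=h_{K_0}.
\end{equation*}
Since $(r_{ij})>0$ this is parabolic while the solution stays smooth and strictly convex, so short-time existence and uniqueness of $\{M_t\}$ follow from standard quasilinear parabolic theory, $[0,T)$ being the maximal interval (a comparison with balls shows $T<\infty$). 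The exponent $n+3$ is singled out by $p=-n-1$: it is precisely the value for which homothetic ellipsoids solve the flow self-similarly, equivalently for which the origin-centred ellipsoids of fixed volume are the equilibria of the volume-normalised, time-reparametrised flow $\partial_\tau\tilde h=\tilde h^{\,n+3}\det(\tilde r_{ij})-\mu(\tau)\tilde h$ ($\mu$ chosen so that $V(\tilde K_\tau)=V(B)$), and for which the whole flow is covariant under $SL(n+1)$. This $SL(n+1)$-covariance will be used repeatedly.

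Two structural facts come first. Every convex body satisfies $\int_{\mathbb{S}^n}u\,dS_{K}(u)=0$, and since $\det(r_{ij})\,d\sigma=dS_{K_t}$ we get $\frac{d}{dt}\int_{\mathbb{S}^n}u\,h^{-(n+2)}\,d\sigma=-(n+2)\int_{\mathbb{S}^n}u\,dS_{K_t}=0$; hence the Santal\'{o}-point condition is preserved, so $0$ stays the Santal\'{o} point of $K_t$ and the Blaschke--Santal\'{o} inequality gives $V(K_t)V(K_t^{*})\le V(B)^{2}$ for the polar body $K_t^{*}$, with equality only for origin-centred ellipsoids. Secondly, from $V(K_t^{*})=\tfrac{1}{n+1}\int_{\mathbb{S}^n}h^{-(n+1)}\,d\sigma$ one finds $\frac{d}{dt}V(K_t^{*})=-\int_{\mathbb{S}^n}h\,\det(r_{ij})\,d\sigma=-(n+1)V(K_t)$ and $\frac{d}{dt}V(K_t)=\int_{\mathbb{S}^n}h^{\,n+3}\det(r_{ij})^{2}\,d\sigma$; Cauchy--Schwarz applied to $\int_{\mathbb{S}^n}h\,\det(r_{ij})\,d\sigma=\int_{\mathbb{S}^n}\big(h^{(n+3)/2}\det(r_{ij})\big)\big(h^{-(n+1)/2}\big)\,d\sigma$ then yields $\frac{d}{dt}\big(V(K_t)V(K_t^{*})\big)\ge0$, with equality iff $\det(r_{ij})=c\,h^{-(n+2)}$. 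Thus the affine-invariant Mahler product $V(K_t)V(K_t^{*})$ is nondecreasing and bounded above by $V(B)^{2}$; it converges and its time-derivative tends to $0$. For the normalised flow this reads $V(\tilde K_\tau^{*})\nearrow L'\le V(B)$.

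Next I would establish uniform a priori estimates for the normalised flow, modulo $SL(n+1)$. Using the covariance, at each time I apply an element of $SL(n+1)$ carrying the John ellipsoid of $\tilde K_\tau$ to a ball; since $V(\tilde K_\tau)=V(B)$, John's inclusion confines $\tilde K_\tau$ between two fixed balls, giving two-sided $C^{0}$ bounds for the so-normalised support function. A Tso-type maximum-principle argument bounds the speed $\tilde h^{\,n+3}\det(\tilde r_{ij})$ from above, a barrier argument bounds it from below, and from these one extracts gradient estimates and then — the technical crux — two-sided bounds on the principal radii of curvature, i.e. uniform parabolicity; here I would adapt the curvature estimates developed for the affine and centro-affine normal flows together with an Andrews-type Harnack inequality. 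Krylov--Safonov and Schauder estimates then give uniform $C^{\infty}$ bounds modulo $SL(n+1)$, whence $T=\infty$ for the normalised flow and, via Arzel\`a--Ascoli, subconvergence of the shapes of $\tilde K_\tau$.

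Finally, convergence. Any subsequential limit of the normalised flow is stationary, hence a self-similar (soliton) solution with support function satisfying $\det(\bar\nabla^2 h+h\,\mathrm{id})=c\,h^{-(n+2)}$; by the classification of closed solitons of the centro-affine normal flow (equivalently, uniqueness for the stationary, smooth, origin-centred Monge--Amp\`ere problem), every such limit is an origin-centred ellipsoid, which then realises equality in Blaschke--Santal\'{o}, forcing $L'=V(B)$ and $c=1$. So in shape space $\tilde K_\tau$ converges to the round point. To promote this to $C^{\infty}$-convergence of $\tilde M_t$ to a single (generally eccentric) origin-centred ellipsoid, I would combine $\int_0^{\infty}\lVert\partial_\tau\tilde h\rVert_{L^{2}}^{2}\,d\tau<\infty$ (from the monotone quantity and the estimates) with a {\L}ojasiewicz--Simon inequality for the associated energy near the degenerate critical manifold of ellipsoids; equivalently, $SL(n+1)$-covariance makes the round point a linearly stable fixed point in shape space, and one shows the residual $SL(n+1)$-drift is integrable in time, so $\tilde K_\tau$ converges to one ellipsoid (in fact exponentially). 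The main obstacle is the $C^{2}$ (uniform convexity) estimate; the passage from shape-convergence to convergence towards a unique ellipsoid is the second delicate point, while the remaining ingredients are routine adaptations of standard curvature-flow arguments.
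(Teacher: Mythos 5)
This theorem is not proved in the present paper at all: it is quoted verbatim from \cite{Ivaki2014-gauss} as background, so the only meaningful comparison is with the proof given there. Your structural skeleton does match that proof's mechanism, and your two explicit computations are correct: since $\partial_t h=h^{n+3}S_n$ and $\int_{\mathbb{S}^n}u\,S_n\,d\sigma=0$ for every closed convex hypersurface, the Santal\'o-point condition is indeed preserved; and the Cauchy--Schwarz argument giving $\frac{d}{dt}\bigl(V(K_t)V(K_t^{*})\bigr)\ge 0$, with equality exactly when $S_n=c\,h^{-(n+2)}$, is the correct monotone quantity, which together with Blaschke--Santal\'o and the equality case forces any subsequential limit of the normalized flow to be an origin-centered ellipsoid. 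Up to this point your proposal is a faithful reconstruction of the known strategy.

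There are, however, two genuine gaps, and they are precisely where the work of \cite{Ivaki2014-gauss} lies. First, your a priori estimates are obtained only \emph{modulo} $SL(n+1)$: normalizing by the John ellipsoid at each time gives bounds for $A_\tau\tilde K_\tau$, not for $\tilde K_\tau$, and nothing in your argument rules out that the gauge $A_\tau$ escapes to infinity in $SL(n+1)$ --- note that $V(K)V(K^*)\le V(B)^2$ is itself affine invariant and therefore cannot prevent $\tilde K_\tau$ from becoming arbitrarily eccentric. The cited proof establishes the two-sided bound $r\le h_{\tilde K_t}\le R$ for the normalized body itself (the estimate the present paper imports as \eqref{ratio} from \cite[Corollary~7.5]{Ivaki2014-gauss}), and only then runs the Tso-type and curvature estimates; without that step your Krylov--Safonov/Arzel\`a--Ascoli conclusion applies to the wrong objects. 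Second, the passage from subconvergence in shape space to $C^\infty$-convergence of $\tilde M_t$ to a \emph{single} ellipsoid is asserted via a {\L}ojasiewicz--Simon inequality near the critical manifold of ellipsoids, but that manifold is the non-compact $SL(n+1)$-orbit of the ball, all of whose points realize the same value $V(B)^2$ of your monotone functional; so the functional cannot by itself select the limit, and the claimed integrability of the $SL(n+1)$-drift is exactly what needs proof. Both points are flagged in your last paragraph as ``delicate,'' but as written they are placeholders rather than arguments, so the proposal does not constitute a proof.
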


Here $h_{K_0}$ is the support function of $K_0.$ A closed, convex hypersurface $M_0$ can be described in terms of its support function $h_{K_0}:\mathbb{S}^n\to\mathbb{R}$ defined by
\[h_{K_0}(u)=\sup\{u\cdot x: x\in M_0\}.\]
If $M_0$ is smooth and strictly convex, then $h_{K_0}(u)=u\cdot F_0(\nu^{-1}(u)).$

From the evolution equation of $F(\cdot,t)$ it follows that $$h(\cdot,t):=h_{K_t}(\cdot):\mathbb{S}^{n}\times [0,T)\to \mathbb{R}$$ evolves by
\begin{equation}\label{eq: flow4}
\partial_th(u,t)=\varphi(u)(h^{2-p}S_{n})(u,t),
\end{equation}
where $S_{n}(u,t)=1/\mathcal{K}(\nu^{-1}(u,t),t).$
A homothetic self-similar solution of this flow satisfies
\begin{align} \label{def: self similar}
h^{1-p} \det (\bar{\nabla}^2 h + \operatorname{Id}h)=\frac{c}{\varphi},
\end{align}
for some positive constant $c.$ Here $\bar{\nabla}$ is the covariant derivative on $\mathbb{S}^{n}$. Note that $S_{n}=\det (\bar{\nabla}^2 h + \operatorname{Id}h).$

We list the main results of the paper extending the previous mentioned results.

\begin{theorem}\label{thm1}
Let $-n-1<p<\infty$ and $\varphi$ be a positive, smooth even function on $\mathbb{S}^{n}$ i.e., $\varphi(u)=\varphi(-u)$. Suppose $K_0$ is origin-symmetric. There exists a unique origin-symmetric solution $\{M_t\}$ of (\ref{e: flow0}) such that $\{\tilde{M}_t\}$ converges for a subsequence of times in $C^{1}$ to a smooth, origin-symmetric, strictly convex solution of (\ref{def: self similar}). Also, when $p\leq n+1$ the convergence is in $C^{\infty}$, and if $p\ge 1$ the convergence holds for the full sequence.
\end{theorem}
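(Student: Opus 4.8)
The plan is to pass to the volume-normalized flow and run the standard program: short-time existence together with preservation of symmetry, a monotone scale-invariant functional, $C^{0}$ through $C^{2}$ a priori estimates, long-time existence, and then extraction of a self-similar limit. Equation \eqref{eq: flow4}, after linearization, is uniformly parabolic as long as $\bar\nabla^{2}h+\operatorname{Id}h>0$, so there is a unique smooth solution on a maximal interval $[0,T)$; since $\varphi$ is even and \eqref{eq: flow4} is invariant under the antipodal map, parabolic uniqueness forces $h(\cdot,t)$ to remain even, i.e.\ $K_{t}$, and hence $\tilde K_{t}$, stays origin-symmetric, so in particular $0\in\operatorname{int}K_{t}$ as long as the flow is non-degenerate. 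Setting $\tilde h=(V(B)/V(K_{t}))^{1/(n+1)}h$ and reparametrizing time, $\tilde h$ solves a fixed-volume flow whose stationary points are precisely the solutions of \eqref{def: self similar}. A computation using that the cofactor matrix $s^{ij}$ of $\bar\nabla^{2}h+\operatorname{Id}h$ is divergence free shows that the scale-invariant functional $\Phi(t)=\big(\int_{\mathbb{S}^{n}}\varphi^{-1}h^{p}\,du\big)V(K_{t})^{-p/(n+1)}$ (with the obvious logarithmic replacement when $p=0$) is monotone along the flow, and that equality in the underlying Cauchy-Schwarz inequality is attained exactly at solutions of \eqref{def: self similar}.

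First, the $C^{0}$ estimates. Monotonicity of $\Phi$, scale invariance, and the volume constraint $\int_{\mathbb{S}^{n}}\tilde h\,\tilde S_{n}\,du=(n+1)V(B)$ bound $\int_{\mathbb{S}^{n}}\varphi^{-1}\tilde h^{p}\,du$; combined with the origin-symmetry of $\tilde K_{\tau}$ and the John position $E\subseteq\tilde K_{\tau}\subseteq\sqrt{n+1}\,E$ of its John ellipsoid $E$, this yields uniform two-sided bounds $0<c_{1}\le\tilde h\le c_{2}$ on the support function, and hence $|\bar\nabla\tilde h|\le c_{2}$. This is exactly the point where the hypotheses that $\varphi$ be even, that $K_{0}$ be origin-symmetric, and that $p>-n-1$ are used: without them the normalized body could degenerate or escape to infinity.

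The heart of the matter, and the step I expect to be the main obstacle, is the $C^{2}$ estimate: showing that the matrix of principal radii $W_{ij}=\bar\nabla_{ij}\tilde h+\tilde h\,\delta_{ij}$ stays uniformly bounded above and below, so that $\tilde K_{\tau}$ remains uniformly convex with controlled Gauss curvature. Because the normal speed in \eqref{e: flow0} depends on $\nu$ through both $\varphi(\nu)$ and the position factor $(F\cdot\nu)^{2-p}$, the naive tensor maximum principle applied to $W$ breaks down, and one must instead adapt the auxiliary-function curvature estimates of Guan, Ren and Wang, developed for stationary prescribed-curvature equations, to the present parabolic, anisotropic $L_{p}$ setting. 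The structural sign condition in that estimate translates into the restriction $p\le n+1$, which is the source of the $C^{\infty}$-versus-$C^{1}$ dichotomy in the theorem: for $p\le n+1$ one obtains $c_{3}\operatorname{Id}\le W\le c_{4}\operatorname{Id}$, equation \eqref{eq: flow4} becomes uniformly parabolic, standard parabolic regularity theory yields uniform $C^{k}$ bounds for every $k$, and in particular $T=\infty$.

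Finally, convergence. Since $\Phi$ is monotone and bounded, $\int_{0}^{\infty}|\dot\Phi|\,d\tau<\infty$, hence $\dot\Phi(\tau_{k})\to0$ along some sequence $\tau_{k}\to\infty$. For $p\le n+1$ the uniform estimates make $\{\tilde h(\cdot,\tau_{k})\}$ precompact in $C^{\infty}$, so any limit $\tilde h_{\infty}$ is a positive, even, smooth, uniformly convex support function, and $\dot\Phi\to0$ together with the equality case of Cauchy-Schwarz forces $\tilde h_{\infty}$ to solve \eqref{def: self similar}. For $p>n+1$ only the $C^{0}$ and $C^{1}$ bounds are available, but by convexity the family is still precompact in $C^{1}$; a weak-limit argument based on the monotonicity of $\Phi$ (and weak convergence of the surface area measures) shows the subsequential limit is an Alexandrov solution of \eqref{def: self similar} whose right-hand side $c/(\varphi\,\tilde h_{\infty}^{1-p})$ is bounded and positive, so Caffarelli's regularity theory makes it smooth and strictly convex, consistent with only $C^{1}$ flow convergence being asserted in this range. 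When $p\ge1$ one shows moreover that $\tilde h(\cdot,\tau)$ converges as $\tau\to\infty$, and not merely along a subsequence, using the additional monotonicity and stability available precisely in the $L_{p}$-Brunn-Minkowski regime $p\ge1$; this upgrades the conclusion to convergence of the full family.
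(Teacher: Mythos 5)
Your overall architecture (normalized flow, monotone functional, $C^{0}$ bounds via origin-symmetry, $C^{2}$ bounds via Guan--Ren--Wang, subsequential limit from the monotonicity) matches the paper's in outline, but the step you yourself call ``the heart of the matter'' is left as a black box, and your account of where the restriction $p\le n+1$ enters is wrong in a way that matters. In the paper the Guan--Ren--Wang adaptation (Lemma \ref{lem: final} together with the appendix) bounds $\|W\|$ for \emph{all} $p>-n-1$ on any finite time interval on which the support function is pinched; there is no structural sign condition there forcing $p\le n+1$. The $C^{1}$-versus-$C^{\infty}$ dichotomy comes from a different place entirely: the uniform \emph{upper} bound on the Gauss curvature of the normalized solution (imported from \cite[Lemma 9.2]{Ivaki2014-gauss}) is only available for $p\le n+1$, and without it the curvature pinching cannot be made uniform in time. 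Relatedly, your claim that $T=\infty$ when $p\le n+1$ is false: for $p<n+1$ the maximal time is finite and the hypersurfaces expand to infinity as $t\to T$ (Proposition \ref{prop: expansion to infty}), which is precisely why the paper first proves long-time existence for the unnormalized flow with $t_1$-dependent constants and only afterwards performs a separate rescaling argument for the normalized flow.

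A second concrete gap: the Guan--Ren--Wang machinery takes two-sided Gauss curvature bounds as \emph{input} (the constants $\delta_i$ in the alternative of Corollary \ref{Alternative} depend on them), and the lower bound on the principal curvatures is deduced from the upper bound on $\|W\|$ \emph{together with} a lower bound on $\mathcal{K}$. You never produce these Gauss curvature bounds. The upper bound on $\mathcal{K}$ is Tso's argument, but the lower bound is not routine: the paper obtains it by passing to the evolution of the polar body (Lemma \ref{lem: upper}, using Lemma \ref{app1}), and for the normalized flow the uniform-in-time lower bound requires the additional parabolic rescaling argument of Section \ref{convergence-1}. Without these ingredients your asserted pinching $c_{3}\operatorname{Id}\le W\le c_{4}\operatorname{Id}$ does not follow. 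The remaining parts of your sketch --- the $C^{0}$ estimate from origin-symmetry, the $C^{1}$ subsequential limit plus Chou--Wang/Caffarelli regularity of the limit for $p>n+1$, and full convergence for $p\ge 1$ from uniqueness of self-similar solutions --- are consistent with what the paper does.
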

If $-n-1< p\leq -n$, we can extend the result of the previous theorem by dropping the assumption that $\varphi$ is even. 
\begin{theorem}
Let $-n-1< p\leq -n$ and $K_0$ satisfy $$\int_{\mathbb{S}^{n}}\frac{u}{\varphi(u)h_{K_0}(u)^{1-p}}d\sigma(u)=0.$$ There exists a unique solution $\{M_t\}$ of flow (\ref{e: flow0}) such that $\{\tilde{M}_t\}$ converges for a subsequence of times in $C^{\infty}$ to a positive, smooth, strictly convex solution of (\ref{def: self similar}).
\end{theorem}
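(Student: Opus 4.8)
The plan is to follow the strategy behind Theorem~\ref{thm1}, with the integral hypothesis on $K_0$ playing the role that origin-symmetry played there. Since \eqref{eq: flow4} is (degenerate) parabolic on the cone of support functions of convex bodies containing the origin, short-time existence and uniqueness of a convex solution $\{M_t\}$ are standard, and the content of the theorem is a set of a priori estimates, uniform in $t$, for the volume-normalized flow $\{\tilde M_t\}$. I would work with $\tilde h=(V(B)/V(K_t))^{1/(n+1)}h$ and reparametrize time so that $V(\tilde K_t)\equiv V(B)$; then $\partial_\tau\tilde h=\varphi\,\tilde h^{2-p}\tilde S_n-\beta(\tau)\tilde h$, where $\tilde S_n=\det(\bar\nabla^2\tilde h+\operatorname{Id}\tilde h)$ and $\beta=((n+1)V(B))^{-1}\int_{\mathbb S^n}\varphi\,\tilde h^{2-p}\tilde S_n^2\,d\sigma>0$ is forced by the volume constraint.

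The quantity driving the flow is $\mathcal J(\tau):=\int_{\mathbb S^n}\varphi^{-1}\tilde h^{\,p}\,d\sigma$. Using $\int_{\mathbb S^n}\tilde h\,\tilde S_n\,d\sigma=(n+1)V(B)$ and the Cauchy--Schwarz inequality
\[
 \big((n+1)V(B)\big)^2=\Big(\int_{\mathbb S^n}\tilde h\,\tilde S_n\,d\sigma\Big)^2\le\int_{\mathbb S^n}\varphi\,\tilde h^{2-p}\tilde S_n^2\,d\sigma\cdot\int_{\mathbb S^n}\varphi^{-1}\tilde h^{\,p}\,d\sigma ,
\]
a direct computation gives $\tfrac{d}{d\tau}\mathcal J=p\big((n+1)V(B)-\beta\mathcal J\big)$, which for $p<0$ is $\ge 0$, with equality exactly when $\tilde h^{1-p}\tilde S_n$ is proportional to $\varphi^{-1}$, i.e.\ at solutions of \eqref{def: self similar}. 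Hence $\mathcal J$ is nondecreasing and $\mathcal J(\tau)\ge\mathcal J(0)>0$ along the flow. Critical points of $\int\varphi^{-1}h^{p}d\sigma$ under fixed volume are precisely the self-similar solutions, so this is the functional whose (subsequential) limit will produce the desired solution.

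The heart of the proof is the $C^0$ estimate, and this is where both hypotheses enter. First, the integral condition is preserved: from \eqref{eq: flow4},
\[
 \partial_t\!\Big(\frac{u}{\varphi\, h^{1-p}}\Big)=\frac{u}{\varphi}\,(p-1)\,h^{\,p-2}\,\partial_t h=(p-1)\,u\,S_n ,
\]
and since $\int_{\mathbb S^n}u\,S_n\,d\sigma=\int_{\mathbb S^n}u\,dS_{K_t}(u)=0$ we get $\tfrac{d}{dt}\int_{\mathbb S^n}\varphi^{-1}h^{\,p-1}u\,d\sigma=0$, so the hypothesis persists for all $t$ (the volume normalization does not affect its vanishing). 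Now suppose the inradius of $\tilde K_t$ is not bounded below: passing to a sequence of times there is $u_0\in\mathbb S^n$ with $\tilde h(u_0,t_j)\to0$. Since $1-p\ge n+1>0$ the density $\varphi^{-1}\tilde h^{\,p-1}$ is singular near $u_0$, and the balance condition $\int_{\mathbb S^n}\varphi^{-1}\tilde h^{\,p-1}u\,d\sigma=0$ forces a comparable concentration of this measure near $-u_0$, whence $\tilde h(-u_0,t_j)\to0$ as well; thus $\tilde K_{t_j}$ is confined to a slab of width tending to $0$, and the fixed volume makes it long and thin. A direct estimate (valid because $p>-n-1$) then shows $\mathcal J(t_j)\to0$ for such bodies, contradicting $\mathcal J\ge\mathcal J(0)>0$. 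So the inradius is bounded below, and since $V(\tilde K_t)$ is fixed this also bounds the circumradius; hence $0<c\le\tilde h\le C<\infty$ uniformly in $t$. It is precisely here that the restriction $-n-1<p\le-n$ is used: for these $p$ the density $\varphi^{-1}\tilde h^{\,p-1}$ is singular enough near directions where $\tilde h$ is small that the vanishing first moment pins the origin uniformly inside $\tilde K_t$, whereas for $-n<p<1$ the origin may reach the boundary of the limit and no genuine solution of \eqref{def: self similar} need exist.

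With the $C^0$ bounds in hand the rest proceeds as in the even case. The gradient bound $|\bar\nabla\tilde h|\le C$ follows from $|\bar\nabla\tilde h|^2+\tilde h^2=|\tilde F|^2\le C^2$ and convexity. The second-order estimate --- two-sided bounds on the principal radii of curvature, equivalently uniform ellipticity plus a $C^2$ bound --- is the technical core, obtained by adapting the curvature estimates of Guan--Ren--Wang to \eqref{e: flow0}; since $p\le-n<n+1$ we are in the range where these yield uniform bounds, which is why the conclusion is $C^\infty$ and not merely $C^1$. Krylov--Safonov and Schauder estimates (using concavity of $\det^{1/n}$) then bootstrap to uniform $C^\infty$ bounds, so the flow exists for all time and $\{\tilde h(\cdot,\tau)\}$ is precompact in $C^\infty(\mathbb S^n)$. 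Finally, $\mathcal J$ is monotone and now bounded, hence convergent, so $\tfrac{d}{d\tau}\mathcal J\to0$ and $\beta\mathcal J-(n+1)V(B)\to0$; along any time sequence realizing this, a subsequence of $\tilde h(\cdot,\tau)$ converges in $C^\infty$ to a limit at which the Cauchy--Schwarz inequality above becomes an equality, hence to a positive, smooth, strictly convex solution of \eqref{def: self similar}; uniqueness of the flow is the usual parabolic statement. I expect the $C^0$ estimate to be the main obstacle: converting the preserved vanishing first moment into a quantitative confinement of the origin and matching it against the lower bound $\mathcal J\ge\mathcal J(0)$ is exactly what forces the range $-n-1<p\le-n$, while the second-order estimate, though the most demanding computation, is the adaptation of Guan--Ren--Wang already advertised in the abstract rather than a new obstruction.
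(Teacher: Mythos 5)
Your overall skeleton is the same as the paper's: a monotone entropy $\mathcal J=\int\varphi^{-1}\tilde h^{p}\,d\sigma$ obtained from Cauchy--Schwarz against the volume constraint, the conservation of the first moment $\int\varphi^{-1}h^{p-1}u\,d\sigma$ (your computation $\partial_t(\varphi^{-1}h^{p-1}u)=(p-1)uS_n$ together with $\int uS_n\,d\sigma=0$ is exactly why the hypothesis is phrased as it is), Tso-type bounds on $\mathcal K$ from both sides via the polar flow (Lemmas \ref{lem: lower}, \ref{lem: upper}), the Guan--Ren--Wang adaptation for the second-order estimate (Lemma \ref{lem: final}), and Krylov--Safonov plus the convergence of $\mathcal J$ to extract a self-similar subsequential limit. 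The paper itself proves none of the $C^{0}$ estimate in situ: the bound \eqref{ratio} is quoted from the proof of \cite[Corollary~7.5]{Ivaki2014-gauss}, and everything downstream is Sections \ref{sec:long-time-existence}--\ref{sec: nor conv}. So the question is whether your reconstruction of that $C^{0}$ estimate closes, and as written it does not.

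The gap is in the logical order of the $C^{0}$ argument. First, your concluding deduction ``inradius bounded below $+$ fixed volume $\Rightarrow$ circumradius bounded $\Rightarrow$ $0<c\le\tilde h\le C$'' is invalid for the lower bound: $\min_u\tilde h$ is the distance from the \emph{origin} to $\partial\tilde K_t$, which can tend to $0$ while inradius and circumradius stay controlled (the origin drifting to the boundary). Ruling that out is precisely the job of the moment condition, and your treatment of it (``the balance condition forces a comparable concentration near $-u_0$, whence $\tilde K_{t_j}$ is confined to a slab'') is an assertion, not a proof: balancing the blow-up of $\int\varphi^{-1}\tilde h^{p-1}u\,d\sigma$ near $u_0$ only requires $\tilde h\to0$ somewhere in the opposite \emph{hemisphere}, which confines the body to a wedge, not a slab. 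Second, your claimed contradiction ``$\mathcal J(t_j)\to0$'' for degenerating bodies is correct only when the origin is \emph{proportionally centered}: the favorable exponent $\epsilon^{\,n+1+p}\to0$ (this is where $p>-n-1$ enters) comes from the small measure of the set where $\tilde h$ is comparable to the width; if instead the origin sits at distance $\delta$ from the boundary in a direction where the body is long, the local contribution behaves like $R^{-n}\delta^{\,p+n}$, which blows up for $p<-n$ as $\delta\to0$. So the entropy argument cannot be run before the pinning of the origin is established quantitatively, while your pinning step is exactly the unproved claim. In short, the two mechanisms you name are the right ones, but the step ``moment condition $\Rightarrow$ origin stays a fixed fraction inside $\tilde K_t$'' is the actual content of \cite[Corollary~7.5]{Ivaki2014-gauss} and is missing from your proposal; without it neither the lower bound $\tilde h\ge c$ nor the $\mathcal J\to0$ computation is justified. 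The remainder of your outline (curvature bounds, Guan--Ren--Wang, compactness, and the limit being a solution of \eqref{def: self similar} via equality in Cauchy--Schwarz) matches the paper.
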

Given any convex body $K_0$, there exists a vector $\vec{v}$ such that $K_0+\vec{v}$ has the origin in its interior and it satisfies the assumption of the second theorem.

For $\varphi\equiv 1$ we prove the following theorem.
\begin{theorem}
Let $1\neq p>-n-1,~\varphi\equiv 1$ and $K_0$ satisfy $$\int_{\mathbb{S}^{n}}\frac{u}{h_{K_0}(u)^{1-p}}d\sigma(u)=0.$$ Then there exists a unique solution $\{M_t\}$ of (\ref{e: flow0}) such that $\{\tilde{M}_t\}$ converges  in $C^{1}$ to the unit sphere. In addition, for $1\neq p\leq n+1$ the convergence holds in $C^{\infty}$.
\end{theorem}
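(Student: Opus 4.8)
The plan is to run the argument of Theorem~\ref{thm1} with the moment condition $\int_{\mathbb{S}^n}u\,h_{K_0}(u)^{p-1}\,d\sigma(u)=0$ taking over the role that origin-symmetry plays there, and then to exploit two facts special to $\varphi\equiv1$: that this moment condition is an invariant of the flow, and that — together with the volume normalization — it forces the limiting self-similar body to be the unit sphere. For the invariance: writing the support-function flow (\ref{eq: flow4}) with $\varphi\equiv1$ gives $\partial_t(h^{p-1})=(p-1)h^{p-2}\partial_th=(p-1)S_n$, hence
\[
\frac{d}{dt}\int_{\mathbb{S}^n}\frac{u}{h^{1-p}(u,t)}\,d\sigma(u)=(p-1)\int_{\mathbb{S}^n}u\,S_n(u,t)\,d\sigma(u)=(p-1)\int_{M_t}\nu\,dA=0,
\]
since $\int_{\mathbb{S}^n}u\,dS_{K_t}(u)=\int_{M_t}\nu\,dA=0$ holds for every convex body (the divergence theorem, i.e.\ the Minkowski compatibility condition on surface area measures). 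As the normalization $\tilde K_t$ only rescales $h$ by a positive factor, $\int_{\mathbb{S}^n}u\,h_{\tilde K_t}(u)^{p-1}\,d\sigma(u)=0$ for all $t$ as well.

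Next I would establish long-time existence together with a subsequential limit. Since $\varphi\equiv1$ is even, the curvature estimates of the paper (adapted from Guan, Ren and Wang) and the long-time existence argument go through, provided the normalized support functions are pinched, $0<c\le h_{\tilde K_t}\le C$. The upper bound follows, as in Theorem~\ref{thm1}, from monotonicity of the relevant scale-invariant functional along (\ref{eq: flow4}), which confines $\tilde K_t$ to a compact family of nondegenerate bodies. The lower bound — equivalently, keeping the origin uniformly away from $\partial K_t$ — is where the moment condition substitutes for symmetry: if $h_{\tilde K_t}(u_0)$ were small, then for $p<1$ the integrand $u\,h^{p-1}$ would blow up near $u_0$ and could not integrate to zero, while for $p\ge1$ one couples the moment balance with the interior-minimum inequality $\det(\bar\nabla^2 h+\operatorname{Id}h)\ge h^n$ at a minimum point of $h$. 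Granting the pinching, the standard argument (via monotonicity of the functional) yields a unique smooth solution of (\ref{e: flow0}) on $[0,\infty)$ such that $\{\tilde M_t\}$ converges, along a subsequence of times, in $C^1$ — in $C^\infty$ if $p\le n+1$ — to a smooth, strictly convex solution $M_\infty$ of (\ref{def: self similar}) with $\varphi\equiv1$, namely $h_\infty^{1-p}\det(\bar\nabla^2 h_\infty+\operatorname{Id}h_\infty)=c$; by the first step $M_\infty$ again satisfies $\int_{\mathbb{S}^n}u\,h_\infty^{p-1}\,d\sigma=0$, and by normalization $V(\tilde K_\infty)=V(B)$.

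It then remains to identify $M_\infty$ and to promote the convergence to the full flow. For $p>1$, the equation $h_\infty^{1-p}\det(\bar\nabla^2 h_\infty+\operatorname{Id}h_\infty)=c$ is the $L_p$-Minkowski equation with constant data, whose only solutions up to dilation are origin-centered spheres (an off-centre sphere $h=R+z\cdot u$ solves it only for $z=0$); for $-n-1<p<1$ the same equation says that $M_\infty$ is a closed, strictly convex self-shrinker of the flow $\partial_t X=-\mathcal K^{1/(1-p)}\nu$ with power $1/(1-p)>1/(n+2)$, and such self-shrinkers are round spheres centered at the origin. Either way $V(\tilde K_\infty)=V(B)$ forces $h_\infty\equiv1$, so $M_\infty=\mathbb{S}^n$. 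As this did not depend on the chosen subsequence, every subsequential limit of the precompact family $\{\tilde M_t\}$ equals $\mathbb{S}^n$, hence $\tilde M_t\to\mathbb{S}^n$ along the whole flow, in $C^1$ when $1\ne p>-n-1$ and in $C^\infty$ when $1\ne p\le n+1$. (Note the moment condition enters only in the $C^0$-lower bound above; the identification of $M_\infty$ uses no hypothesis on the origin, and the exclusion $p\ne1$ is forced because for $p=1$ equation (\ref{def: self similar}) is translation invariant and $M_\infty$ would be determined only up to a translation.)

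I expect the main obstacle to be this $C^0$-lower bound: turning the preserved moment condition into a genuine replacement for origin-symmetry in controlling the position of the origin inside $K_t$, uniformly over $p\in(-n-1,\infty)\setminus\{1\}$, with the precise mechanism depending on the range of $p$. A lesser difficulty is the uniqueness input for $-n-1<p<1$; to stay internal to the flow one can instead prove monotonicity of a scale-invariant entropy along (\ref{eq: flow4}) and show that its critical points satisfying the moment condition are spheres, which for $\varphi\equiv1$ is easier than the analogue underlying Theorem~\ref{thm1} because (\ref{def: self similar}) then has constant right-hand side.
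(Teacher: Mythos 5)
Your proposal follows essentially the same route as the paper: the moment condition (which, as you verify, is preserved by the flow) yields the $C^0$ pinching $0<r\le h_{\tilde K_t}\le R$ --- a step the paper itself outsources to the proof of Corollary 7.5 of \cite{Ivaki2014-gauss} --- long-time existence and subsequential convergence to a solution of (\ref{def: self similar}) come from the curvature estimates of Section \ref{sec:long-time-existence} and the arguments of Section \ref{sec: nor conv}, and the limit is identified as the unit sphere exactly as in Section \ref{convergence-1}, via uniqueness of constant-data solutions of the $L_p$-Minkowski problem \cite{Lu1,39} for $p>1$ and via the Brendle--Choi--Daskalopoulos classification \cite{Choi} of self-shrinkers of the $\alpha$-Gauss curvature flow with $\alpha=1/(1-p)>1/(n+2)$ for $-n-1<p<1$, after which uniqueness of the limit upgrades subsequential to full convergence. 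The one place your sketch is vaguest --- turning the moment condition into a uniform lower bound on $h_{\tilde K_t}$, especially for $p\ge1$ where the integrand $h^{p-1}$ does not blow up --- is precisely the step the paper also does not prove internally, so the two arguments coincide in structure and in their external inputs.
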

For $p\neq n+1$, self-similar solutions to \eqref{e: flow0} are solutions of the $L_p$-Minkowski problem \eqref{e:lp}, and for $p=n+1$, a self-similar solution to \eqref{e: flow0} is a solution to the normalized $L_{n+1}$-Minkowski problem \eqref{e:nlp}, which we shall introduce them now.

The Minkowski problem deals with existence, uniqueness, regularity, and stability of closed convex hypersurfaces whose Gauss curvature (as a function of the outer normals) is preassigned. Major contributions to this problem were made by Minkowski \cite{M1,M2}, Aleksandrov \cite{A2,A3,A4}, Fenchel and Jessen \cite{FJ}, Lewy \cite{Le1,Le2}, Nirenberg \cite{N}, Calabi \cite{Cal}, Pogorelov \cite{P1,P2}, Cheng and Yau \cite{ChYau}, Caffarelli, Nirenberg, and Spruck \cite{CNS}, and others. A generalization of the Minkowski problem known as the $L_p$-Minkowski problem was introduced by Lutwak in \cite{Lu1}, where for any $1<p\neq n+1$ and a preassigned even Borel measure on $\mathbb{S}^n$ whose support does not lie in a great sphere of $\mathbb{S}^{n}$ the existence and uniqueness of the solution were proved. This generalization for $1<p\neq n+1$ was further studied by Lutwak and Oliker in \cite{LuO}, where they obtained the $C^{k,\alpha}$ regularity of the solution. Solutions to many cases of these generalized problems followed later in \cite{1,AnCrys,AndrewsBen2000,Andrews 2003,27,36,39,49,50,51,72,LYZ,79,104,110,s1,s2,Zhu1,Zhu2,Zhu3,jiang,QL,DZ}.

For $p\neq n+1$, in the smooth category, the $L_p$-Minkowski problem asks, given a smooth, positive function $\varphi : \mathbb{S}^n \to \mathbb{R}$, does there exist a smooth, closed, strictly convex hypersurface $M_0\subset \mathbb{R}^{n+1}$ such that
\begin{equation}
\label{e:lp}
\frac{h^{1-p}(\nu(x))}{\mathcal{K}(x)} =\frac{1}{\varphi(\nu(x))}
\end{equation}
where $x \in M_0$, $h$ denotes the support function, $\mathcal{K}$ the Gauss curvature and $\nu$ the Gauss map $M_0 \to \mathbb{S}^n$. The \emph{even} $L_p$-Minkowski problem requires in addition, that $\varphi$ is an even function. The case $p=1$ is the original Minkowski problem.

The special case of $p=n+1$ is troubling since (\ref{e:lp}) might not have a solution. To remedy this, Lutwak, Yang and Zhang introduced a normalized formulation of the $L_{n+1}$-Minkowski problem in \cite{LYZ} and they proved the existence and uniqueness of the solution for any prescribed even Borel measure on $\mathbb{S}^n$ whose support is not contained in a great sphere of $\mathbb{S}^{n}$. In the smooth category, the normalized $L_{n+1}$-Minkowski problem asks for the existence of a smooth, closed, strictly convex hypersurface $M_0\subset \mathbb{R}^{n+1}$ that solves
\begin{equation}
\label{e:nlp}
\frac{1}{h^{n}(\nu(x))\mathcal{K}(x)} =\frac{V(K_0)}{\varphi(\nu(x))},
\end{equation}
where $K_0$ is the convex body with the boundary $M_0.$
In the rest of the paper, the $L_p$-Minkowski problem  refers to either (\ref{e:lp}) or (\ref{e:nlp}), and we avoid the word ``normalized".

The existence and regularity of solutions to the $L_p$-Minkowski problem are rather comprehensively discussed in \cite{39} for $p >-n-1$. Our study on (\ref{e: flow0}) provides an alternative variational treatment (based on curvature flow) of the even $L_p$-Minkowski problem. For $p=1$, Chou-Wang \cite{Chou Wang 2000} treated the classical $L_1$-Minkowski problem in the smooth category by a logarithmic Gauss curvature flow. For $n=1$, and $1\neq p>-3$, the existence of solutions to the $L_p$-Minkowski problems follows from Andrews' results \cite{Andrews1998} on the asymptotic behavior of a family of contracting and expanding flows of curves. Also, in higher dimensions, the existence of solutions to the $L_p$-Minkowski problems follows from \cite{AndrewsBen2000} when $-n-1<p\leq -n+1$ (a short proof of this is also given in \cite{Ivaki-Proc}) or when $\varphi$ is even (e.q., $\varphi(u)=\varphi(-u)$) and $-n+1<p<1.$ See also \cite{Andrews1999,AGN, GN, U1, U2}.

Using our results for the flows above, it is now a simple matter to give a new, unified proof of the smooth, even $L_p$-Minkowski problem for all ranges of $p>-n-1$.

\begin{corollary}
Let $-n-1<p<\infty$ and $\varphi$ be a positive, smooth even function on $\mathbb{S}^{n}$ i.e., $\varphi(u)=\varphi(-u)$. Then for $p\neq n+1$ there exists an origin-symmetric, smooth, strictly convex body such that (\ref{e:lp}) is satisfied. For $p=n+1$, there exists an origin-symmetric, smooth, strictly convex body such that (\ref{e:nlp}) is satisfied.
\end{corollary}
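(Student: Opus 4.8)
The plan is to obtain the corollary as an essentially immediate consequence of Theorem~\ref{thm1}, the only extra ingredient being an elementary rescaling that removes an undetermined multiplicative constant.

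First I would apply Theorem~\ref{thm1} with the prescribed positive, smooth, even function $\varphi$ and with, say, $K_0=B$ as the origin-symmetric initial body. This yields a smooth, origin-symmetric, strictly convex hypersurface whose support function $h$ solves the self-similar equation \eqref{def: self similar},
\[
h^{1-p}\det(\bar{\nabla}^2 h + \operatorname{Id}h)=\frac{c}{\varphi}
\]
for some constant $c>0$. Since $S_{n}=\det(\bar{\nabla}^2 h + \operatorname{Id}h)=1/\mathcal{K}$ (composed with the Gauss map), this is exactly \eqref{e:lp} up to the factor $c$, which still has to be normalized away.

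Second I would rescale. For $\lambda>0$ the dilate $\lambda K$ has support function $\lambda h$, and $\det(\bar{\nabla}^2(\lambda h)+\operatorname{Id}(\lambda h))=\lambda^{n}\det(\bar{\nabla}^2 h+\operatorname{Id}h)$, so
\[
(\lambda h)^{1-p}\det(\bar{\nabla}^2(\lambda h)+\operatorname{Id}(\lambda h))=\lambda^{n+1-p}\,\frac{c}{\varphi}.
\]
When $p\neq n+1$ the choice $\lambda=c^{-1/(n+1-p)}$ produces a body whose support function satisfies $\tilde{h}^{1-p}/\mathcal{K}=1/\varphi$, i.e. \eqref{e:lp}; dilation preserves smoothness, strict convexity and origin-symmetry, so this is the body asserted by the corollary. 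When $p=n+1$ the left side of \eqref{e:nlp}, namely $1/(h^{n}\mathcal{K})=h^{-n}S_{n}$, is invariant under dilations (its total homogeneity degree in $h$ is zero), whereas $V(\lambda K)=\lambda^{n+1}V(K)$; since \eqref{def: self similar} now reads $1/(h^{n}\mathcal{K})=c/\varphi$, taking $\lambda=(c/V(K))^{1/(n+1)}$ makes $\lambda K$ solve \eqref{e:nlp}.

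I do not anticipate a real obstacle. All the genuine work --- long-time existence for \eqref{e: flow0}, the a priori estimates adapted from Guan--Ren--Wang, and subsequential $C^{1}$ (respectively $C^{\infty}$) convergence of $\{\tilde{M}_t\}$ to a self-similar limit --- is already contained in Theorem~\ref{thm1}; the corollary merely extracts the limiting hypersurface and performs the homogeneity bookkeeping above. The one point deserving care is the exponent $p=n+1$: there the scaling of \eqref{e:lp} degenerates, so the undetermined constant cannot be absorbed, and one must instead invoke the normalized formulation \eqref{e:nlp}, where the volume of $K$ supplies exactly the missing degree of freedom.
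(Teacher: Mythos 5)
Your proposal is correct and follows essentially the same route as the paper: extract the self-similar limit from Theorem~\ref{thm1}, then dilate by $\lambda=c^{-1/(n+1-p)}$ for $p\neq n+1$ and by $\lambda=(c/V(K))^{1/(n+1)}$ for $p=n+1$, exactly matching the paper's choice of $\lambda$ once one notes that the paper's explicit constant $c=(n+1)V(B)/\int_{\mathbb{S}^n}h^p\varphi^{-1}d\sigma$ (obtained by integrating the equation against $h\,d\sigma$ and using $V(K)=V(B)$) is your abstract $c$.
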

\begin{proof}
By the first part of Theorem \ref{thm1} (only the convergence for a \emph{subsequence} of times is needed), there exists a smooth, strictly convex body $K$ with the volume of the unit ball and a constant $c>0$ such that
  \[\frac{h}{\mathcal{K}}=\frac{ch^{p}}{\varphi}.\]
  Hence $c\int_{\mathbb{S}^{n}}\frac{ h^{p}}{\varphi}d\sigma=(n+1)V(B^n).$ Thus there is a solution to
  \[\frac{h^{1-p}(\nu(x))}{\mathcal{K}(x)}=\left(\frac{(n+1)V(B)}{\int_{\mathbb{S}^{n}}\frac{ h^{p}}{\varphi}d\sigma}\right)\frac{1}{\varphi(\nu(x))}.\]
Now let us define
\[\lambda:=\left\{
  \begin{array}{ll}
    \left(\frac{\int_{\mathbb{S}^n}\frac{h^p}{\varphi}d\sigma}{(n+1)V(B)}\right)^{\frac{1}{n+1-p}}, & \hbox{$p\neq n+1$;} \\
    \left(\frac{(n+1)V(B)}{V(K)\int_{\mathbb{S}^n}\frac{h^{n+1}}{\varphi}d\sigma}\right)^{\frac{1}{n+1}}, & \hbox{$p=n+1$.}
  \end{array}
\right.
\]
Therefore, $\lambda K$ solves the smooth, even $L_p$-Minkowski problem.
\end{proof}

Let us close this section with a brief outline of this paper. The main difficulty in proving convergence of the normalized solutions is in obtaining long-time existence. The issue arises from the time-dependent anisotropic factor (the support function). We believe in such generality, (\ref{e: flow0}) serves as the first example where a time-dependent anisotropic factor is allowed. To prove long-time existence, we first obtain bounds on the Gauss curvature in Section \ref{subsec:gauss-curvature-bounds}. Using the well-known standard technique of Tso \cite{Tso} we obtain upper bounds. We obtain lower bounds by applying the same technique to the evolution of the polar body as in \cite{Ivaki-Proc}. Controlling the principal curvatures requires estimates of higher derivatives of the speed which is generally quite difficult due to the non-linearity of the flow. In Section \ref{subsec:principal-curvature-bounds} we obtain these crucial estimates by adapting the remarkable $C^2$ estimates of Guan-Ren-Wang for the prescribed curvature problem see \cite[(4.2)]{Guan}. Long time existence then follows readily by standard arguments. Once it is proved that solutions to the flow exist until they expand to infinity uniformly in all directions, the method of \cite[Section 8]{Ivaki2014-gauss} applies and yields convergence of the volume-normalized solutions in $C^{1}$ to self-similar solutions provided $p\neq1$. Further work is required to establish convergence of normalized solutions if $p=1$, and to prove convergence in $C^{\infty}$ for $p\leq n+1$; this is accomplished in Section \ref{sec: nor conv}; see also Remark \ref{rem}.
\section*{Acknowledgment}
The work of the first author was supported by the EPSRC on a Programme Grant entitled ``Singularities of Geometric Partial Differential Equations'' reference number EP/K00865X/1. The work of the second author was supported by Austrian Science Fund (FWF) Project
M1716-N25 and the European Research Council (ERC) Project 306445.


\section{basic evolution equations}

Let $g=\{g_{ij}\}$, and $W=\{w_{ij}\}$ denote, in order, the induced metric and the second fundamental form of $M$. At every point in the hypersurface $M$ choose a local orthonormal frame $\{e_1,\ldots, e_n\}.$

We use the following standard notation
\[w_i^j=g^{mj}w_{im},\]
\[(w^2)_i^j=g^{mj}g^{rs}w_{ir}w_{sm},\]
\[|W|^2=g^{ij}g^{kl}w_{ik}w_{lj}=w_{ij}w^{ij}.\]
Here, $\{g^{ij}\}$ is the inverse matrix of $\{g_{ij}\}.$

We use semicolons to denote covariant derivatives. The following geometric formulas are well-known:
\begin{align*}
\nu_{;i} &= w_i^ke_k,\\
\nu_{;ij} &= g^{kl}w_{ij;l}e_k - w_i^lw_{lj}\nu,\\
h_{;i} &= w_i^k (F\cdot e_k),\\
h_{;ij} &= w_{ij} - hw_i^lw_{lj} + F \cdot \nabla w_{ij}.\\
\end{align*}
Note that in above we considered the support function as a function on the boundary of the hypersurface; that is, at the point $x\in M$ we have
$$h(x)=F(x)\cdot \nu(x).$$

For convenience, let $\psi(x)=h^{2-p}(x)\varphi(\nu(x))$. The following evolution can be deduced in a standard manner; see for example \cite{Gerhardt:/2006}.
\begin{lemma} The following evolution equations hold:
\[\partial_t \nu = -\nabla \left(\frac{\psi}{\mathcal{K}}\right),\]
\begin{align*}
\partial_t w_i^j &= -\left(\frac{\psi}{\mathcal{K}}\right)_{;ik}g^{kj} - \left(\frac{\psi}{\mathcal{K}}\right) w_i^kw_k^j \\
&= \psi \frac{\mathcal{K}^{kl}}{\mathcal{K}^2} w_{i;kl}^j + \psi \frac{\mathcal{K}^{kl}}{\mathcal{K}^2} w_{kr}w_l^rw_i^j -(n+1)\frac{\psi }{\mathcal{K}} w_{i}^k w_k^{j} \\
&\quad  + \psi\frac{\mathcal{K}^{kl,rs}}{\mathcal{K}^{2}}g^{jm} w_{kl;i}w_{rs;m}- \frac{2\psi}{\mathcal{K}^{3}}g^{jm} \mathcal{K}_{;i}\mathcal{K}_{;m} \\
&\quad + \frac{1}{\mathcal{K}^{2}}g^{jk}\mathcal{K}_{;k}\psi_{;i} + \frac{1}{\mathcal{K}^{2}} g^{jk}\psi_{;k}\mathcal{K}_{;i} - \frac{1}{\mathcal{K}} g^{jk}\psi_{;ik},
\end{align*}
\begin{align*}
\partial_th &= \psi \frac{\mathcal{K}^{ij}}{\mathcal{K}^{2}} h_{;ij} + \psi h \frac{\mathcal{K}^{ij}}{\mathcal{K}^{2}} w_i^lw_{lj} - (n-1) \frac{\psi}{\mathcal{K}}- \frac{1}{\mathcal{K}} F\cdot\nabla\psi.
\end{align*}
\end{lemma}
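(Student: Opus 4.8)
The plan is to reduce everything to the standard first-variation formulae for a hypersurface moving in its normal direction, and then to substitute the particular speed. Writing $\mathcal{F}:=\psi/\mathcal{K}$ and recalling that $\psi(x)=h^{2-p}(x)\varphi(\nu(x))$ with $h=F\cdot\nu$, so that \eqref{e: flow0} reads $\partial_t F=\mathcal{F}\nu$, I would first differentiate in $t$ the defining relations $g_{ij}=F_{;i}\cdot F_{;j}$, $\nu\cdot\nu=1$, $\nu\cdot F_{;i}=0$ and $w_{ij}=\nu_{;i}\cdot F_{;j}$ and use the Weingarten and Gauss relations recorded above; this is the routine computation (see e.g.\ \cite{Gerhardt:/2006}) that gives $\partial_t g_{ij}=2\mathcal{F}w_{ij}$ together with
\[
\partial_t\nu=-\nabla\mathcal{F},\qquad \partial_t w_i^j=-g^{jk}\mathcal{F}_{;ik}-\mathcal{F}\,w_i^k w_k^j,\qquad \partial_t h=\mathcal{F}-F\cdot\nabla\mathcal{F}.
\]
The first evolution equation of the Lemma is then immediate from $\partial_t\nu=-\nabla(\psi/\mathcal{K})$, and the first of the two lines of the $\partial_t w_i^j$ formula in the Lemma is precisely the second identity above; it remains to expand the last two expressions.

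For $\partial_t w_i^j$ I would insert $\mathcal{F}=\psi\mathcal{K}^{-1}$ and differentiate twice, using $\mathcal{K}_{;k}=\mathcal{K}^{rs}w_{rs;k}$ and $\mathcal{K}_{;ik}=\mathcal{K}^{rs,pq}w_{rs;i}w_{pq;k}+\mathcal{K}^{rs}w_{rs;ik}$. The Leibniz rule applied to $-g^{jk}(\psi/\mathcal{K})_{;ik}$ directly produces all the displayed terms containing $\psi_{;ik}$, $\psi_{;i}\mathcal{K}_{;k}$, $\mathcal{K}_{;i}\mathcal{K}_{;k}$ and $\mathcal{K}^{rs,pq}w_{rs;i}w_{pq;k}$. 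The one nontrivial point is to bring the remaining second-order term $\tfrac{\psi}{\mathcal{K}^{2}}\mathcal{K}^{rs}w_{rs;ik}$ into divergence form: since the ambient space is flat, the Codazzi equations make $\nabla w$ totally symmetric, so $w_{rs;ik}$ may be rewritten with $r,s$ as the outermost derivative indices, and commuting the remaining covariant derivatives on $M$ while substituting the Gauss equation $R_{ijkl}=w_{ik}w_{jl}-w_{il}w_{jk}$ for the intrinsic curvature turns this into $\tfrac{\psi}{\mathcal{K}^{2}}\mathcal{K}^{rs}w_{ik;rs}$ --- which, after raising $j$, is the leading term $\psi\tfrac{\mathcal{K}^{kl}}{\mathcal{K}^{2}}w_{i;kl}^j$ --- together with zeroth-order corrections which, using the homogeneity identity $\mathcal{K}^{rs}w_{rs}=n\mathcal{K}$, collapse to $\psi\tfrac{\mathcal{K}^{kl}}{\mathcal{K}^{2}}w_{kr}w_l^r w_i^j$ and a term $-n\tfrac{\psi}{\mathcal{K}}w_i^k w_k^j$; the latter combines with the $-\mathcal{F}w_i^k w_k^j$ already present to produce the coefficient $-(n+1)$, and reassembling the pieces gives the stated second line.

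For $\partial_t h$ I would start from $\partial_t h=\mathcal{F}-F\cdot\nabla\mathcal{F}$ with $\mathcal{F}=\psi/\mathcal{K}$, which gives $\partial_t h=\tfrac{\psi}{\mathcal{K}}-\tfrac{1}{\mathcal{K}}F\cdot\nabla\psi+\tfrac{\psi}{\mathcal{K}^{2}}F\cdot\nabla\mathcal{K}$, so that only $F\cdot\nabla\mathcal{K}$ needs to be rewritten. Contracting the identity $h_{;ij}=w_{ij}-h w_i^l w_{lj}+F\cdot\nabla w_{ij}$ recorded above with $\mathcal{K}^{ij}$ and using $\mathcal{K}^{ij}\nabla_k w_{ij}=\mathcal{K}_{;k}$ and $\mathcal{K}^{ij}w_{ij}=n\mathcal{K}$ gives $\mathcal{K}^{ij}h_{;ij}=n\mathcal{K}-h\,\mathcal{K}^{ij}w_i^l w_{lj}+F\cdot\nabla\mathcal{K}$; substituting this and simplifying $\tfrac{\psi}{\mathcal{K}}-n\tfrac{\psi}{\mathcal{K}}=-(n-1)\tfrac{\psi}{\mathcal{K}}$ gives the claimed formula for $\partial_t h$.

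The step I expect to be the main obstacle is the middle one: commuting covariant derivatives of the second fundamental form through the fully nonlinear function $\mathcal{K}=\det(w_i^j)$ of the Weingarten map, and keeping careful track of every zeroth- and first-order term so that the coefficient of $w_i^k w_k^j$ comes out as exactly $-(n+1)$ and the various terms involving $\nabla\mathcal{K}$ and $\nabla\psi$ assemble into precisely the displayed combination. Everything else is a mechanical differentiation of the structure equations, for which I would simply appeal to the standard references.
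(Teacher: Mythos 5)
Your proposal is correct and follows exactly the route the paper intends: the paper gives no computation, deferring to the standard first-variation formulae in Gerhardt's book, and your outline (deriving $\partial_t\nu=-\nabla\mathcal F$, $\partial_t w_i^j=-g^{jk}\mathcal F_{;ik}-\mathcal F w_i^kw_k^j$, $\partial_t h=\mathcal F-F\cdot\nabla\mathcal F$ and then expanding $\mathcal F=\psi/\mathcal K$ via the chain rule, the Codazzi--Gauss commutation and the homogeneity identities $\mathcal K^{kl}w_{kl}=n\mathcal K$, $\mathcal K_{;k}=\mathcal K^{rs}w_{rs;k}$) reproduces every displayed term with the right coefficients, including the $-(n+1)$ and the $-(n-1)$.
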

\section{long-time existence}
\label{sec:long-time-existence}
\subsection{Lower and upper bounds on Gauss curvature}
\label{subsec:gauss-curvature-bounds}
The proofs of the following two lemmas are similar to the proofs of \cite[ Lemmas 4.1, 4.2]{Ivaki-Proc}. For completeness, we give the proofs here. In this section we use $\bar{\nabla}$ to denote covariant derivatives on the sphere with respect to the standard metric.

The matrix of the radii of the curvature of a smooth, closed, strictly convex hypersurface is denoted by $\mathfrak{r}=[\mathfrak{r}_{ij}]$ and the entries of $\mathfrak{r}$ are considered as functions on the unit sphere. They can be expressed in terms of the support function as $\mathfrak{r}_{ij}:=\bar{\nabla}^{2}_{ij} h+\bar{g}_{ij}h,$ where $[\bar{g}_{ij}]$ is the standard metric on $\mathbb{S}^{n}$. Additionally, we recall that
$S_n=\det [\mathfrak{r}_{ij}]/\det[\bar{g}_{ij}].$
\begin{lemma}\label{lem: lower}
Let $\{M_t\}$ be a solution of (\ref{e: flow0}) on $[0,t_1]$. If $c_2\leq h_{K_t}\leq c_1$ on $[0,t_1]$, then $\mathcal{K}\leq c_4$ on $[0,t_1].$ Here $c_4$ depends on $K_0$, $c_1,c_2,p,\varphi$ and $t_1.$
\end{lemma}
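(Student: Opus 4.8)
The plan is to adapt Tso's maximum-principle technique, following the analogous estimates in \cite{Ivaki-Proc}. Since $h_{K_t}$ is pinched between the positive constants $c_2$ and $c_1$ on $[0,t_1]$ and $\varphi$ is positive and continuous on the compact sphere, the bound $\mathcal{K}\le c_4$ is equivalent to a positive lower bound for the flow speed $\Phi:=\psi/\mathcal{K}=\varphi\,h^{2-p}/\mathcal{K}$; indeed $\mathcal{K}=\varphi\,h^{2-p}/\Phi$, so $\Phi\ge\delta>0$ forces $\mathcal{K}\le\delta^{-1}\sup(\varphi h^{2-p})$. Recalling that along the flow $\partial_t h=\Phi$, I fix a constant $c_0$ with $0<c_0<c_2$ (say $c_0=c_2/2$), so that $h-c_0$ stays between two positive constants, and introduce the auxiliary function
\[
 \Theta:=\frac{\Phi}{h-c_0}=\frac{\psi}{\mathcal{K}\,(h-c_0)}\qquad\text{on }M\times[0,t_1].
\]
It then suffices to bound $\min_M\Theta(\cdot,t)$ from below by a positive constant with the asserted dependence.

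Next I would compute the evolution of $\Theta$ from the basic evolution equations above (the evolution of $\nu$ and of $w_i^j$, the identity $\partial_t h=\Phi$, and $\psi=h^{2-p}\varphi(\nu)$). This yields a parabolic equation of the schematic form
\[
 \partial_t\Theta=\psi\,\frac{\mathcal{K}^{ij}}{\mathcal{K}^{2}}\,\Theta_{;ij}+\langle\xi,\nabla\Theta\rangle+\Theta\,\mathfrak{R},
\]
where $\mathcal{K}^{ij}=\partial\mathcal{K}/\partial w_{ij}$ is positive definite (here $M_t$ is strictly convex), $\xi$ is a vector field whose coefficients are controlled by the data, and the scalar $\mathfrak{R}$ gathers all zeroth-order terms. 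As in Tso's argument, the point of subtracting the constant $c_0$ is that it contributes to $\mathfrak{R}$ a term of a definite, favourable sign; the remaining contributions to $\mathfrak{R}$ come from the reaction terms in the evolution of $w_i^j$ and from the derivatives $\nabla\psi$, $\nabla^2\psi$ of the anisotropy factor.

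Then I would invoke the parabolic maximum principle. At a point where $\Theta(\cdot,t)$ attains its spatial minimum (such a point exists since $M$ is closed) one has $\nabla\Theta=0$ and $\mathcal{K}^{ij}\Theta_{;ij}\ge0$, whence
\[
 \frac{d}{dt}\min_M\Theta(\cdot,t)\ \ge\ \big(\min_M\Theta(\cdot,t)\big)\,\mathfrak{R}\big|_{\min}.
\]
Using the bounds on $h$, the homogeneity identity $\mathcal{K}^{ij}w_{ij}=n\mathcal{K}$, positivity of $\mathcal{K}^{ij}$, and Young's inequality to absorb the $\nabla\psi$-terms, the goal is a differential inequality $\frac{d}{dt}\min_M\Theta\ge -C_1\min_M\Theta-C_2(\min_M\Theta)^2$ with $C_1,C_2$ depending only on $n,p,c_1,c_2,\varphi$; then the reciprocal $u=1/\min_M\Theta$ obeys $\dot u\le C_1u+C_2$, so $u$ is bounded on $[0,t_1]$ in terms of $u(0)=1/\min_M\Theta(\cdot,0)$ — finite since $M_0$ is smooth and strictly convex, hence controlled by $K_0$ — and of $C_1,C_2,t_1$. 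Unwinding gives $\Phi=(h-c_0)\Theta$ bounded below and $\mathcal{K}\le c_4$ with $c_4=c_4(K_0,c_1,c_2,p,\varphi,t_1)$.

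The main obstacle I anticipate is the lower estimate of $\mathfrak{R}$. Because $\psi=h^{2-p}\varphi(\nu)$ genuinely depends on the point of $M_t$ (not only on $\nu$), $\nabla\psi$ and $\nabla^2\psi$ bring in the second fundamental form, which is not yet under control; one must check that, after bounds of the type $|\nabla\psi|\le C(1+|W|)$ and Young's inequality, these are dominated by the coercive $c_0$-term. Moreover the term $\psi\,\mathcal{K}^{ij}g_{ij}/\mathcal{K}^{2}$, essentially an elementary symmetric function of the principal curvatures, is likewise not a priori bounded, and this is precisely where the special structure of Tso's auxiliary function must be exploited. Should the direct argument not close in full generality, the alternative --- as in \cite{Ivaki-Proc} --- is to run the identical argument for the Gauss-curvature-type flow satisfied by the polar bodies $K_t^{\circ}$, which contract while the $K_t$ expand and for which Tso's classical estimate applies directly; the pinching of $h_{K_t}$ gives the pinching of $h_{K_t^{\circ}}=1/\rho_{K_t}$, and the classical relation between the Gauss curvatures of a convex body and its polar transfers the bound back to $\mathcal{K}$.
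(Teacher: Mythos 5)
The overall strategy is right (a Tso-type auxiliary function giving a positive lower bound for the speed on $[0,t_1]$, hence an upper bound on $\mathcal{K}$), but your choice of auxiliary function contains a sign error that prevents the argument from closing. You take $\Theta=\Phi/(h-c_0)$ with $0<c_0<\inf h$ and try to bound its \emph{minimum} from below. At a spatial minimum of $\Theta$ the maximum principle gives $S_n^{ij}\bar\nabla^2_{ij}\Phi\ \ge\ \Theta\, S_n^{ij}\bar\nabla^2_{ij}h$, and carrying this through the evolution of the speed produces the zeroth-order term $\Theta^2\bigl(n-1-c_0\mathcal{H}\bigr)$ with $\mathcal{H}=S_n^{-1}S_n^{ij}\bar g_{ij}=\sum_i\kappa_i$. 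The contribution $-c_0\mathcal{H}\,\Theta^2$ has the \emph{unfavourable} sign at a minimum, and $\mathcal{H}$ is not controlled by $\mathcal{K}$ (a convex hypersurface can have bounded Gauss curvature and arbitrarily large mean curvature); bounds on the principal curvatures only come later, in Lemma \ref{lem: final}, and they use the present lemma as input. So your target inequality $\frac{d}{dt}\min\Theta\ge -C_1\min\Theta-C_2(\min\Theta)^2$ cannot be reached this way. The paper instead uses $\Theta=\psi S_n/(2c_1-h)$, i.e.\ a denominator that is \emph{decreasing} in $h$ with $2c_1-h\ge c_1>0$; then the mean-curvature term appears as $+2c_1\mathcal{H}\Theta^2\ge 0$ and can simply be discarded, leaving $\partial_t\Theta\ge -c\,\Theta^2$ and the ODE bound $\Theta(t)\ge (ct+1/\min\Theta(\cdot,0))^{-1}$, whence the $t_1$-dependent constant. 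Your denominator $h-c_0$ is the correct one for the \emph{opposite} estimate (an upper bound on the speed at a maximum, as in Tso's original contracting setting and in Lemma \ref{lem: upper} for the polar flow); you have the two roles interchanged.

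Two further points. First, the difficulty you flag about $\nabla\psi$, $\nabla^2\psi$ bringing in the second fundamental form is an artifact of working on the hypersurface: the paper runs the whole computation on $\mathbb{S}^n$, where $\partial_t h=\varphi(u)h^{2-p}S_n$ is a scalar parabolic equation and $\Theta=\partial_t h/(2c_1-h)$ is a function of $u$ and $t$; differentiating the equation in time, all derivatives of $\psi S_n$ are absorbed into $S_n^{ij}\bar\nabla^2_{ij}(\psi S_n)$, which is exactly what the critical-point inequality controls, so no separate estimate on $\nabla^2\psi$ is needed. Second, your fallback via polar bodies is the mechanism the paper uses for the complementary Lemma \ref{lem: upper} (lower bound on $\mathcal{K}$); it is not needed here, and in any case transplanting the argument to the polars would run into the same sign issue unless the denominator is chosen as above.
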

\begin{proof}
We apply the maximum principle to the following auxiliary function defined on the unit sphere
\[\Theta=\frac{\psi S_n}{2c_1-h}=\frac{\partial_t h}{2c_1-h}.\]
At any minimum of $\Theta$ we have
\[0=\bar{\nabla}_i\Theta=\bar{\nabla}_i \left(\frac{\psi S_{n}}{2c_1-h}\right)\ \ \ {\hbox{and}}\ \ \  \bar{\nabla}^{2}_{ij}\Theta\geq 0.\]
Therefore, we get
\[ \frac{\bar{\nabla}_i (\psi S_{n})}{2c_1-h}=-\frac{\psi S_{n} \bar{\nabla}_i h}{(2c_1-h)^2} \] and
\begin{equation}\label{e: tso dual}
\bar{\nabla}^{2}_{ij} (\psi S_{n})+\bar{g}_{ij} \psi S_{n}\geq
\frac{-\psi S_{n}\mathfrak{r}_{ij}+2c_1\psi S_{n}\bar{g}_{ij}}{2c_1-h}.
\end{equation}
Differentiating $\Theta$ with respect to time yields
\begin{align*}
\partial_t\Theta&=\frac{\psi S_n^{ij}}{2c_1-h}
\left(\bar{\nabla}^{2}_{ij}(\psi S_{n})+\bar{g}_{ij}\psi S_{n}\right)
+\frac{\psi^2S_n^{2}}{(2c_1-h)^2}\left(1+(2-p)h^{-1}(2c_1-h)\right),
\end{align*}
 where $S_n^{ij}$ is the derivative of $S_n$ with respect to the entry $ \mathfrak{r}_{ij}$.
By applying inequality (\ref{e: tso dual}) to the preceding identity we deduce
\begin{equation}\label{e: last step tso dual}
\partial_t\Theta\geq \Theta^2\left(1-n+2 c_1\mathcal{H}\right)-c\Theta^2,
\end{equation}
where
\[\mathcal{H}=S_{n}^{-1}S_{n}^{ij}\bar{g}_{ij}.\]
Therefore, we arrive at
\[\frac{\varphi\frac{h^{2-p}}{\mathcal{K}}}{2c_1-h}(t, u)\geq \frac{1}{ct+1/\min\limits_{u\in\mathbb{S}^{n}}\frac{\varphi\frac{h^{2-p}}{\mathcal{K}}}{2c_1-h}(0, u)}\geq \frac{1}{ct_1+1/\min\limits_{u\in\mathbb{S}^{n}}\frac{\varphi\frac{h^{2-p}}{\mathcal{K}}}{2c_1-h}(0, u)}.\]
\end{proof}

\begin{lemma}\label{lem: upper}
Let $\{M_t\}$ be a solution of (\ref{e: flow0}) on $[0,t_1]$. If $c_1\leq h_{K_t}\leq c_2$ on $[0,t_1]$, then $\mathcal{K}\geq \frac{1}{a+b t^{-\frac{n}{n+1}}}$ on $(0,t_1],$ where $a$ and $b$ depend only on $c_1,c_2,p,\varphi.$ In particular, $\mathcal{K}\ge c_3$ on $[0,t_1]$ for a positive number $c_{3}$ that depends on $K_0$, $c_1,c_2,p,\varphi$ and is independent of $t_1.$
\end{lemma}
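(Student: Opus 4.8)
The plan is to mirror Tso's technique \cite{Tso}, but applied to the \emph{polar body} rather than to $K_t$ itself, exactly as in \cite{Ivaki-Proc}. Recall that the polar dual $K_t^\ast$ has support function $h^\ast = 1/\rho$, where $\rho$ is the radial function of $K_t$; equivalently the support function of $K_t^\ast$ is governed by the radial parametrization of $K_t$. The bounds $c_1 \le h_{K_t} \le c_2$ translate into two-sided bounds on $h^\ast$ (hence on the radial and support functions of $K_t^\ast$), with constants depending only on $c_1,c_2$. First I would write down the evolution equation satisfied by $h^\ast$ (or, what is cleaner, by $1/\mathcal K$ viewed through the polar picture): since the speed in \eqref{eq: flow4} is $\varphi h^{2-p} S_n = \varphi h^{2-p}/\mathcal K$, the induced evolution of the polar support function is again a Gauss-curvature-type flow with a positive, bounded anisotropic factor, the factor being controlled above and below by constants depending only on $c_1,c_2,p,\varphi$.

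Next, on the polar side the quantity $1/\mathcal K = S_n$ of the original body corresponds, up to bounded factors, to the speed of the polar flow; a lower bound $\mathcal K \ge 1/(a + bt^{-n/(n+1)})$ for $K_t$ is equivalent to an \emph{upper} bound on $S_n$ for the polar body, which is precisely what Tso's auxiliary function delivers. So I would introduce, on $\mathbb{S}^n$, the test function
\[
\Theta^\ast = \frac{(\text{polar speed})}{h^\ast - \tfrac{1}{2}c_0},
\]
where $c_0$ is a lower bound for $h^\ast$ (so the denominator is bounded below by a positive constant and above by a constant, by the hypothesis), and apply the parabolic maximum principle at a point where $\Theta^\ast$ attains its spatial maximum. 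Differentiating $\Theta^\ast$ in time and using the elliptic inequalities coming from $\bar\nabla\Theta^\ast = 0$, $\bar\nabla^2\Theta^\ast \le 0$ at that point — completely parallel to \eqref{e: tso dual}--\eqref{e: last step tso dual} in the proof of Lemma \ref{lem: lower} — yields a differential inequality of the form
\[
\frac{d}{dt}\max_{\mathbb{S}^n}\Theta^\ast \le A\,(\max_{\mathbb{S}^n}\Theta^\ast)^{2} - B\,(\max_{\mathbb{S}^n}\Theta^\ast)^{\frac{n+2}{n+1}},
\]
where $A,B>0$ depend only on $c_1,c_2,p,\varphi$; the crucial gain is that the good term has exponent $\tfrac{n+2}{n+1}$ because the polar speed contains $S_n^\ast$ to the first power while $\mathcal H^\ast$ contributes $(S_n^\ast)^{1/n}$ by the arithmetic--geometric mean inequality applied to the eigenvalues of $\mathfrak r^\ast$. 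Integrating this ODE (a Bernoulli-type comparison, as in Tso's original argument) gives $\max_{\mathbb{S}^n}\Theta^\ast(t) \le C(1 + t^{-n/(n+1)})$, and unwinding the definitions converts this into $\mathcal K \ge 1/(a + b t^{-n/(n+1)})$ on $(0,t_1]$ with $a,b$ depending only on $c_1,c_2,p,\varphi$. Since $a + b t^{-n/(n+1)}$ is bounded on any interval bounded away from $0$, while for $t$ near $0$ we instead use that $\mathcal K$ is bounded below by a positive constant determined by $K_0$ (continuity of $\mathcal K$ at $t=0$), we obtain the uniform lower bound $\mathcal K \ge c_3 > 0$ on all of $[0,t_1]$ with $c_3$ depending on $K_0, c_1, c_2, p, \varphi$ but \emph{not} on $t_1$.

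The main obstacle is bookkeeping the polar transformation carefully: one must verify that the flow \eqref{e: flow0} of $K_t$ does induce a flow of $K_t^\ast$ of the asserted form, that the anisotropic factor picked up on the polar side is genuinely two-sidedly bounded in terms of $c_1,c_2,p,\varphi$ only (this is where the hypothesis $c_1 \le h_{K_t} \le c_2$ is consumed, and where the exponent $2-p$ enters harmlessly as a bounded power), and that the reciprocal/radial-function identities are applied consistently so that the final estimate for $\Theta^\ast$ really is an estimate for $\mathcal K$ from below. The maximum-principle computation itself is then a routine repetition of the argument already carried out for Lemma \ref{lem: lower}, with the roles of "upper" and "lower" interchanged and the sharper exponent $\tfrac{n+2}{n+1}$ in place of the linear term; no new analytic idea beyond \cite{Tso} and \cite{Ivaki-Proc} is needed.
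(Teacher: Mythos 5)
Your proposal follows essentially the same route as the paper: apply Tso's auxiliary function to the polar flow (as in \cite{Ivaki-Proc}), integrate the resulting ODE to get $S_n^{\ast\,-1}\le a'+b't^{-n/(n+1)}$, transfer this back to a lower bound on $\mathcal K$, and handle small times via short-time existence. Two small points. First, your differential inequality has the wrong exponent: the gain from $\mathcal H^\ast\ge n(S_n^\ast)^{-1/n}\sim\Theta^{1/n}$ multiplies the $\Theta^2$ coefficient, so the good term is $-B\,\Theta^{2+\frac1n}$, not $-B\,\Theta^{\frac{n+2}{n+1}}$; as literally written your ODE does not close, since $\tfrac{n+2}{n+1}<2$ and the quadratic term dominates, whereas $2+\tfrac1n>2$ does yield $\Theta\le c+c't^{-n/(n+1)}$. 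Second, the "unwinding" step that converts the polar estimate into a bound on $\mathcal K$ is not mere bookkeeping: the paper invokes the identity $\bigl(S_nh^{n+2}\bigr)(u)\,\bigl(S_n^{\ast}h^{\ast\,n+2}\bigr)(u^{\ast})=1$ due to Hug, which together with the two-sided bounds on $h$ and $h^\ast$ gives exactly the comparability you assert; you should cite or prove this identity rather than treat it as routine.
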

\begin{proof}
Suppose $K_t^{\ast}$ is the polar body\footnote{The polar body of a convex body $K$ with the origin of $\mathbb{R}^{n+1}$ in its interior is the convex body defined by
\[K^{\ast}=\{x\in\mathbb{R}^{n+1}: x\cdot y\leq 1 \mbox{~for~all~}y\in K\}.\]} of $K_t$ with respect to the origin. We furnish quantities associated with polar bodies with $^\ast$. The polar bodies evolve by
$$\partial_th^{\ast}=-\psi^{\ast} S_{n}^{\ast-1},\quad h^{\ast}(\cdot,t)=h_{K_{t}^{\ast}}(\cdot),$$
where $$\psi^{\ast}=\frac{(h^{\ast2}+|\bar{\nabla}h^{\ast}|^2)^{\frac{n+1+p}{2}}}{h^{\ast n+1}}\varphi\left(\frac{h^{\ast}u+\bar{\nabla} h^{\ast}}{\sqrt{h^{\ast2}+|\bar{\nabla}h^{\ast}|^2}}\right);$$
see Lemma \ref{app1} for the proof.
In addition, we have $c_1'=1/c_2\leq h^{\ast}\leq 1/c_1=c_2'.$ We will show that the function
\[\Theta=\frac{\psi^{\ast} S_{n}^{\ast-1}}{h^{\ast}-c_1'/2}\]
remains bounded.
At any maximal point of $\Theta:$
\[0=\bar{\nabla}_i\Theta=\bar{\nabla}_i \left(\frac{\psi^{\ast} S_{n}^{\ast-1}}{h^{\ast}-c_1'/2}\right)\ \ \ {\hbox{and}}\ \ \  \bar{\nabla}^{2}_{ij} \Theta\leq 0.\]
Hence, we obtain
\begin{equation}\label{e: grad of Q}
\frac{\bar{\nabla}_i (\psi^{\ast} S_{n}^{\ast-1})}{h^{\ast}-c_1'/2}=\frac{\psi^{\ast} S_{n}^{\ast-1} \bar{\nabla}_i h^{\ast}}{(h^{\ast}-c_1'/2)^2},
\end{equation}
and consequently,
\begin{equation}\label{e: tso}
\bar{\nabla}^{2}_{ij}(\psi^{\ast} S_{n}^{\ast-1})+\bar{g}_{ij}\psi^{\ast} S_{n}^{\ast-1}\leq
\frac{\psi^{\ast} S_{n}^{\ast-1}\mathfrak{r}_{ij}^{\ast}-\frac{c_1'}{2}\psi^{\ast} S_{n}^{\ast-1}
\bar{g}_{ij}}{h^{\ast}-c_1'/2}.
\end{equation}
Differentiating $\Theta$ with respect to time yields
\begin{align*}
\partial_t\Theta=&\frac{\psi^{\ast}S_n^{\ast -2}}{h^{\ast}-c_1'/2}S_n^{\ast ij}
\left(\bar{\nabla}^{2}_{ij}(\psi^{\ast} S_{n}^{\ast-1})+\bar{g}_{ij}
\psi^{\ast} S_{n}^{\ast-1}\right)\\
&+\frac{S_{n}^{\ast-1}}{h^{\ast}-c_1'/2}\partial_t \psi^{\ast}+\Theta^2.
\end{align*}
On the other hand, in view of
\[|\partial_t h^{\ast}|=\psi^{\ast} S_{n}^{\ast-1},\quad \|\bar{\nabla}\partial_t h^{\ast}\|=\|\bar{\nabla}(\psi^{\ast} S_{n}^{\ast-1})\|=\frac{\psi^{\ast} S_{n}^{\ast-1} \|\bar{\nabla} h^{\ast}\|}{h^{\ast}-c_1'/2},\quad\|\bar{\nabla} h^{\ast}\|\leq c_2',\]
where for the second equation we used (\ref{e: grad of Q}),  we have
\[\frac{S_{n}^{\ast-1}}{h^{\ast}-c_1'/2}\partial_t \psi^{\ast}\leq c(n,p,c_1,c_2,\varphi)\Theta^2.\]
Employing this last inequality and inequality (\ref{e: tso}) we infer that, at any point where the maximum of $\Theta$ is reached, we have
\begin{equation}\label{e: last step tso}
\partial_t\Theta\leq\Theta^2\left(c'-\frac{c_1'}{2}\mathcal{H}^{\ast}\right).
\end{equation}
Moreover, we have
\begin{align*}
\mathcal{H}^{\ast}&\geq n\left(\frac{h^{\ast}-c_1'/2}{\psi^{\ast} S_{n}^{\ast-1}}\right)^{-\frac{1}{n}}
\left(\frac{\psi^{\ast}}{h^{\ast}-c_1'/2}\right)^{-\frac{1}{n}}\\
&\geq n\Theta^{\frac{1}{n}} \left(\frac{c''}{c_1'-c_1'/2}\right)^{-\frac{1}{n}}.
\end{align*}
Therefore, we can rewrite the inequality (\ref{e: last step tso}) as follows
\begin{align*}
\partial_t\Theta&\leq \Theta^2\left(c-c'\Theta^{\frac{1}{n}} \right),
\end{align*} for positive constants $c$ and $c'$ depending only on $p,c_1,c_2,\varphi.$
Hence,
\begin{equation}\label{ie: upper on psi}
\Theta\leq c+c't^{-\frac{n}{n+1}}
\end{equation}
for some positive constants depending only on $p,c_1,c_2,\varphi.$
\footnote{
\begin{claim}Suppose $f$ is a positive smooth function of $t$ on $[0,t_1]$ that satisfies
\begin{align}\label{claim}
\frac{d}{dt}f\leq c_0+c_1f+c_2f^2-c_3f^{2+p},
\end{align}
where $c_3,p$ are positive. There exist constant $c,c'>0$ independent of the solution and depending only on $c_0,c_1,c_2,c_3,p$, such that $f\leq c+c't^{-1/(p+1)}~\mbox{on}~(0,t_1].$
\end{claim}
\begin{proof}
Note that there exists $x_0>0$ such that $c_0+c_1x+c_2x^2-c_3x^{2+p}<-c_3/2x^{2+p}$ for $x>x_0.$ If $f(0)\leq x_0$, then $f$ may increase forward in time, but when $f$ reaches $x_0$, then $f$ must start decreasing (since the right-hand side of (\ref{claim}) becomes negative).
Thus we may assume, without loss of generality, that $f(0)>x_0.$ Therefore, $f>x_0$ on a maximal time interval $[0,t_0).$ On $[0,t_0)$ we can solve
$$\frac{d}{dt}f\le-c_3/2f^{2+p}$$ to obtain
$$f\leq (c_3(p+1)/2t)^{-1/(p+1)}.$$
At $t_0$ we have $c_0+c_1f+c_2f^2-c_3f^{1+p}=-c_3/2f^{2+p}$ and $f=x_0;$ therefore the right-hand side of (\ref{claim}) is still negative. So $f\le f(t_0)$ on $[t_0,t_1].$ In conclusion, $$f\leq \max\{(c_3(p+1)/2t)^{-1/(p+1)}, x_0=f(t_0)\}\leq c+c't^{-1/(1+p)},$$ where $c,c'$ do not depend on solutions.
\end{proof}}
The inequality (\ref{ie: upper on psi}) implies that
 \begin{equation}\label{ie: upper on psi1}
S_n^{\ast -1}\leq a'+b' t^{-\frac{n}{n+1}}\end{equation}
for some $a'$ and $b'$ depending only on $p,c_1,c_2,\varphi.$ Now we can use the argument given in \cite[Lemma 2.3]{Ivaki-Stancu} to obtain the desired lower bound:
For every $u \in \mathbb{S}^{n}$, there exists a unique $u^\ast \in \mathbb{S}^{n}$ such that
$$\left(S_nh^{n+2}\right)(u)\left(S_n^{\ast}h^{\ast n+2}\right)(u^{\ast})=1,$$
see \cite{Hug}.
In view of this identity and (\ref{ie: upper on psi1}) we conclude that on $(0,t_1]$ we have
$$\mathcal{K}\geq \frac{1}{a+b t^{-\frac{n}{n+1}}}$$ for some $a$ and $b$ depending only on $p,c_1,c_2,\varphi.$ The lower bound for $\mathcal{K}$ on $[0,\delta]$ for a small enough $\delta>0$ follows from the short-time existence of the flow. The lower bound for $\mathcal{K}$ on $[\delta,t_1]$ follows from the inequality $\mathcal{K}\geq \frac{1}{a+b \delta^{-\frac{n}{n+1}}}.$
\end{proof}

\subsection{Upper and lower bounds on principal curvatures}
\label{subsec:principal-curvature-bounds}
 To obtain upper and lower bounds on the principal curvatures, denoted by $\{\kappa_i\}_{i=1}^n$, we will consider the auxiliary function used by Guan-Ren-Wang for a prescribed curvature problem; see \cite[(4.2)]{Guan}.
\begin{lemma}\label{lem: final}
Let $\{M_t\}$ be a solution of (\ref{e: flow0}) on $[0,t_1]$. If $c_1\leq h_{K_t}\leq c_2$ on $[0,t_1]$, then $c_5\leq \kappa_i\le c_6$ on $[0,t_1],$ where $c_5$ and $c_6$ depend on $K_0$, $c_1,c_2,p,\varphi$ and $t_1.$
\end{lemma}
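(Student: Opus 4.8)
The plan is to combine the Gauss curvature bounds of Lemmas~\ref{lem: lower} and~\ref{lem: upper} with a parabolic maximum principle built on the Guan--Ren--Wang auxiliary function \cite[(4.2)]{Guan}, adapted to the anisotropic flow \eqref{e: flow0}. First observe that it suffices to bound the largest principal curvature $\kappa_{\max}:=\max_i\kappa_i$ from above on $[0,t_1]$ by a constant $c_6$ of the stated dependence: since Lemmas~\ref{lem: lower}--\ref{lem: upper} give $c_3\le\mathcal K\le c_4$ there, one then gets $\kappa_{\min}\ge\mathcal K\,\kappa_{\max}^{-(n-1)}\ge c_3c_6^{-(n-1)}=:c_5$, and the remaining $\kappa_i$ satisfy $c_5\le\kappa_i\le c_6$ automatically. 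Note also that $c_1\le h_{K_t}\le c_2$ together with convexity yields $|F|\le c_2$ on $[0,t_1]$, and hence, via $h_{;i}=w_i^k(F\cdot e_k)$ and $\nu_{;i}=w_i^ke_k$, full control of $\psi=h^{2-p}\varphi(\nu)$ and its first covariant derivatives in terms of $c_1,c_2,p,\varphi$, with the curvature entering at most linearly.

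To bound $\kappa_{\max}$ I would run the maximum principle on the Guan--Ren--Wang-type quantity
\[
Q=\log w_{\xi\xi}-N\log h+\tfrac a2\,|F|^2,
\]
where, at the maximum point of $Q(\cdot,t)$, one extends the unit eigenvector realizing $\kappa_{\max}$ to a smooth local frame so that $w_{\xi\xi}$ is smooth, agrees with $\kappa_{\max}$ there and is $\le\kappa_{\max}$ nearby, and $N,a>0$ are large constants to be fixed. Differentiating $Q$ along \eqref{e: flow0} using the evolution equation for $w_i^j$ from Section~2, at an interior spatial maximum (so $\nabla Q=0$ and $LQ\le0$, with $Lu:=\psi\frac{\mathcal K^{kl}}{\mathcal K^2}u_{;kl}$) one arrives at a differential inequality of the schematic form $\partial_tQ\le C-c\,\kappa_{\max}+(\text{gradient terms})$. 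The decisive negative term comes from $-N\log h$: since $\partial_th-Lh=\psi h\,\frac{\mathcal K^{ij}}{\mathcal K^2}(w^2)_{ij}-(n-1)\frac{\psi}{\mathcal K}-\frac1{\mathcal K}F\cdot\nabla\psi$ and $\frac{\mathcal K^{ij}}{\mathcal K^2}(w^2)_{ij}=\frac{\operatorname{tr}W}{\mathcal K}\ge\frac{\kappa_{\max}}{\mathcal K}$, this factor contributes a term $\le-cN\kappa_{\max}$, which must dominate the quadratic terms of order $\kappa_{\max}$ produced by $-(n+1)\frac{\psi}{\mathcal K}(w^2)_i^j$, by $\psi\frac{\mathcal K^{kl}}{\mathcal K^2}w_{kr}w_l^rw_i^j$ (which contracts to $\frac{\psi\operatorname{tr}W}{\mathcal K}\,w_i^j$), and by the $W^2$-part of $\psi_{;ij}$ (which, using $\nabla_j(F\cdot e_k)=g_{jk}-hw_{jk}$, equals a bounded multiple of $(w^2)_i^j$); these have coefficients bounded in terms of $c_1,c_2,p,\varphi$, so taking $N$ large suffices for every $p>-n-1$.

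The main obstacle — and the reason the Guan--Ren--Wang function is needed rather than a naive $\log\kappa_{\max}+(\cdots)$ — is the control of the remaining terms: the concavity term $\psi\frac{\mathcal K^{kl,rs}}{\mathcal K^2}w_{kl;i}w_{rs;m}$, the first-order terms $\frac1{\mathcal K^3}\mathcal K_{;i}\mathcal K_{;m}$ and $\frac1{\mathcal K^2}\mathcal K_{;k}\psi_{;i}$, and the $\nabla W$-part of $\psi_{;ij}$. Here one uses concavity of $\log\det$, i.e. $\mathcal K^{kl,rs}\eta_{kl}\eta_{rs}\le\frac1{\mathcal K}(\mathcal K^{kl}\eta_{kl})^2$, to extract a good negative third-order term, and then exploits $\nabla Q=0$ to rewrite $\nabla_\xi w_{\xi\xi}$ through $\nabla h$ and $\nabla|F|^2$, so that all remaining third-order terms are either absorbed into that good term by Cauchy--Schwarz (after possibly enlarging $a$) or bounded by $C\kappa_{\max}$. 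The time-dependence of $\psi$ causes no additional difficulty on the fixed interval $[0,t_1]$, since $\psi$ is controlled in $C^1$ by the pinching $c_1\le h\le c_2$; its second covariant derivatives $\psi_{;ij}$ — after substituting $h_{;i}=w_i^k(F\cdot e_k)$, $\nu_{;i}=w_i^ke_k$ and $\nabla_j(F\cdot e_k)=g_{jk}-hw_{jk}$, a combination of terms linear in $\nabla W$, quadratic in $W$, and bounded — are handled exactly as in \cite{Guan}.

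Carrying this out gives $\partial_tQ\le C-c\,\kappa_{\max}$ (in particular $\partial_tQ\le C$) at any interior maximum of $Q$, hence $\max_MQ(\cdot,t)\le\max_MQ(\cdot,0)+Ct_1$ on $[0,t_1]$; since $-N\log h+\tfrac a2|F|^2$ is bounded in terms of $c_1,c_2$, this yields $\kappa_{\max}\le c_6$ on $[0,t_1]$ with $c_6$ depending on $K_0,c_1,c_2,p,\varphi,t_1$, and the lemma follows from the reduction in the first paragraph.
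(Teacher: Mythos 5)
Your overall architecture matches the paper's: reduce the two-sided bound to an upper bound on the largest curvature via $\kappa_{\min}\ge \mathcal K\,\kappa_{\max}^{-(n-1)}$ and the Gauss curvature bounds of Lemmas \ref{lem: lower}--\ref{lem: upper}, then run a maximum principle on a test function of the form (curvature quantity) $-\alpha\log h$, using the zero-order terms of the $h$-equation to produce a good term $-\alpha\psi\mathcal K^{-1}\operatorname{tr}W$. The cancellation you note between the $\nabla W$-part of $\psi_{;ij}$ and the $F\cdot\nabla\psi$ term is also exactly what happens in the paper (compare \eqref{eq: 3} with \eqref{eq:L10-1}).

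However, there is a genuine gap at the decisive step, namely your claim that after using $\nabla Q=0$ the remaining third-order terms are ``either absorbed into that good term by Cauchy--Schwarz (after possibly enlarging $a$) or bounded by $C\kappa_{\max}$.'' Since $Q$ contains $\log w_{\xi\xi}$, the inequality $0\le(\partial_t-L)Q$ at the maximum carries the positive term $\psi\frac{\mathcal K^{kl}}{\mathcal K^{2}}\frac{w_{\xi\xi;k}w_{\xi\xi;l}}{w_{\xi\xi}^{2}}$ (the analogue of the paper's $E_i$). Substituting the gradient condition $\frac{w_{\xi\xi;k}}{w_{\xi\xi}}=N\frac{h_{;k}}{h}-a(F\cdot e_k)$ and $h_{;k}=\kappa_k(F\cdot e_k)$ turns this into
$N^{2}\psi\mathcal K^{-1}h^{-2}\sum_k\kappa_k(F\cdot e_k)^{2}+\dots$, which is of size $N^{2}\kappa_{\max}$, whereas the only available good terms linear in $\kappa_{\max}$ — namely $-N\psi\mathcal K^{-1}\operatorname{tr}W$ from $-N\log h$ and the $O(1)\cdot\kappa_{\max}$ surplus from $(a)$ the $w^2$-terms — carry coefficient $O(N)$. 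So the ``constant'' in your ``bounded by $C\kappa_{\max}$'' is $O(N^{2})$, and enlarging $N$ makes things worse, not better; the $\tfrac a2|F|^2$ term only contributes $-a\psi\mathcal K^{-1}\sum_k\kappa_k^{-1}\sim\kappa_{\max}^{1/(n-1)}$, which is too weak for $n\ge3$. This is precisely the obstruction that forces the paper to work with $\tfrac12\log\|W\|^2$ instead of $\log\kappa_{\max}$ and, rather than substituting the gradient condition into the $E_i$-term, to invoke the Guan--Ren--Wang dichotomy (Corollary \ref{Alternative}): either all $\kappa_i>\delta_i\kappa_1$, in which case the pinching together with $w_{11}\ge c_1^{\alpha}/\sqrt n$ contradicts the upper bound on $\mathcal K$ for $\alpha$ large, or $A_i+B_i+C_i+D_i-E_i\ge0$, in which case all dangerous third-order terms drop out. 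You name the Guan--Ren--Wang function but never actually deploy this alternative, and without it the maximum-principle computation does not close. Your claim that concavity of $\log\det$ controls the $\mathcal K^{kl,rs}$-term against $-2\mathcal K^{-3}(\mathcal K_{;\xi})^2$ is fine, but that leftover good term $-\psi\mathcal K^{-3}(\mathcal K_{;\xi})^{2}$ involves a different combination of derivatives and cannot absorb the $E$-type term above.
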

\begin{proof}
In view of Lemmas \ref{lem: lower} and \ref{lem: upper}, it suffices to show that $\|W\|$ remains bounded on $[0,t_1]$. Consider the auxiliary function
\[\Theta=\frac 12\log(\|W\|^2)-\alpha\log h.\]
Assume without loss of generality that $c_1>1,$ for otherwise we replace $h$ by $2h/c_{1}$, which does not effect the evolution equation of $\Theta$.
Using the parabolic maximum principle we show that for some $\alpha$ large enough $\Theta(\cdot,t)$ is always negative on $[0,t_1]$. If the conclusion of the theorem is false, we may choose $(x_0,t_0)$ with $t_0>0$ and such that $\Theta(x_0,t_0)=0$, $\Theta(x, t_0) \leq 0$, and $\Theta(x,t) < 0$ for $t<t_0$. Then,
\begin{align*}
0 &\leq \dot{\Theta}-\psi\frac{\mathcal{K}^{kl}}{\mathcal{K}^2}\Theta_{;kl} \\
&= -\frac{\psi}{\|W\|^2} \frac{\mathcal{K}^{kl}}{\mathcal{K}^2} w_{i;k}^j w_{j;l}^i + \frac{2\psi}{\|W\|^4} \frac{\mathcal{K}^{kl}}{\mathcal{K}^2} w^j_iw^s_r w^i_{j;k} w^r_{s;l} \\
&+ \psi \frac{\mathcal{K}^{kl}}{\mathcal{K}^2} w_{kr}w_l^r - (n+1)\psi \frac{(w^2)_i^jw_j^i}{\mathcal{K}\|W\|^2} \\
&+ \frac{\psi w^i_j}{\|W\|^2} \left(\frac{\mathcal{K}^{kl,rs}}{\mathcal{K}^2} w_{kl;i} g^{jp}w_{rs;p} - 2\frac{g^{jp} \mathcal{K}_{;i}\mathcal{K}_{;p}}{\mathcal{K}^3}\right) \\
&+ \left(\frac{2}{\mathcal{K}^2} g^{jp}\psi_{;i} \mathcal{K}_{;p} - \frac{1}{\mathcal{K}} g^{jp} \psi_{;ip}\right) \frac{w^i_j}{\|W\|^2} \\
& +(n-1) \frac{\alpha\psi}{h\mathcal{K}} + \frac{\alpha}{h\mathcal{K}} (F\cdot\nabla\psi) - \frac{\alpha\psi}{h^2} \frac{\mathcal{K}^{kl}}{\mathcal{K}^2} h_{;k}h_{;l} - \alpha\psi \frac{\mathcal{K}^{kl}}{\mathcal{K}^2} w_{kr}w_l^r.
\end{align*}
Pick normal coordinates around $x_0$ such that in $(x_0,t_0)$ there holds
\[g_{ij}=\delta_{ij}, w_{ij}=w_{ii}\delta_{ij}.\]
At $(x_0,t_0)$ we may write $$\mathcal{K}^{kl,rs}w_{kl;i}w_{rs;i}=\mathcal{K}^{kk,ll}w_{kk;i}w_{ll;i}-\mathcal{K}^{kk,ll}w_{kl;i}^2,$$ due to the relation
\begin{align}\begin{split}\label{eq: final-1}
\mathcal{K}^{kl,rs}w_{kl;i}w_{rs;j}w^{ij}=\sum_i w_{ii}\Big(&\sum_{p,q}\frac{\partial^2\mathcal{K}}{\partial \kappa_p\partial\kappa_q}w_{pp;i}w_{qq;i}\\
		&+\sum_{p\neq q}\frac{\frac{\partial\mathcal{K}}{\partial \kappa_p}-\frac{\partial\mathcal{K}}{\partial\kappa_q}}{\kappa_p-\kappa_q} w_{pq;i}^2\Big),
\end{split}\end{align}
see for example \cite[Lemma~2.1.14]{Gerhardt:/2006}.
We obtain after multiplication by $\mathcal{K}^{2}$ that
\begin{align*}
0 \leq& -\frac{\psi}{\|W\|^2} \mathcal{K}^{ii} \sum_lw_{ll;i}^2 - \frac{\psi}{\|W\|^2} \mathcal{K}^{ii} \sum_{p\ne q}w_{pq;i}^2 + \frac{2\psi}{\|W\|^4} \mathcal{K}^{ii} \left(\sum_jw_{jj}w_{jj;i}\right)^2 \\
&+ \psi\mathcal{K}^{ii} w_{ii}^2 -(n+1) \psi \mathcal{K} \sum_i\frac{w_{ii}^3}{\|W\|^2} \\
&+ \frac{\psi}{\|W\|^2} \sum_{i}w_{ii}\left(\mathcal{K}^{pp,qq}w_{pp;i}w_{qq;i} - \mathcal{K}^{pp,qq}w_{pq;i}^2 - 2\frac{(\mathcal{K}_{;i})^2}{\mathcal{K}}\right) \\
&+ \sum_{i}(2\psi_{;i}\mathcal{K}_{;i} - \mathcal{K}\psi_{;ii}) \frac{w_{ii}}{\|W\|^2} \\
&+ (n-1) \frac{\alpha\psi\mathcal{K}}{h} + \frac{\alpha\mathcal{K}}{h}(F\cdot\nabla\psi) - \frac{\alpha\psi}{h^2}\mathcal{K}^{kl} h_{;k} h_{;l} - \alpha\psi\mathcal{K}^{ii}w_{ii}^2.
\end{align*}
At $(x_0,t_0)$ we have
\begin{equation}\label{eq: max}
0 = \Theta_{;k} = \sum_{i}\frac{w_{ii}w_{ii;k}}{\|W\|^2} - \alpha\frac{h_{;k}}{h},
\end{equation}
We may assume at $x_0$ that $w_{11}=\max\{w_{ii}:1\leq i\leq n\}.$ Therefore,
\begin{equation}\label{eq: max1}
\Theta(x_0,t_0)=0\Rightarrow\frac{c_1^{\alpha}}{\sqrt{n}}\leq w_{11}\leq c_2^{\alpha}.
\end{equation}
On the other hand, since $\psi$ is bounded above and below in view of the hypotheses of the lemma, we obtain
\begin{align}\label{eq: 1}
\psi_{;i} \leq C_0w_{ii} \Rightarrow 2\psi_{;i} \mathcal{K}_{;i}&\leq \frac{\varepsilon\psi}{c_4} (\mathcal{K}_{;i})^2 + \frac{c_4C_0^2}{\psi\varepsilon}w_{ii}^2\nonumber\\
&\leq \varepsilon\psi \frac{(\mathcal{K}_{;i})^2}{\mathcal{K}} + C(\varepsilon,K_0,\varphi,t_1) \psi w_{ii}^{2},
\end{align}
where $c_4$ (depending on $t_1$) is from Lemma \ref{lem: lower},
and
\begin{equation}\label{eq: 2}
\psi_{;ii} \geq - C - Cw_{ii} - Cw_{ii}^2 + \sum_{k}w_{ii;k} d_{\nu} \psi(\partial_k).
\end{equation}
Using (\ref{eq: max}) in (\ref{eq: 2}) we obtain
\begin{align}\begin{split}\label{eq: 3}
&-\frac{\mathcal{K}}{\|W\|^2}\sum_iw_{ii} \psi_{;ii}\\
	\leq &\frac{\mathcal{K}}{\|W\|^2}\sum_i w_{ii}(C+Cw_{ii}+Cw_{ii}^2-\sum_{k}w_{ii;k}d_{\nu}\psi(\partial_k))\\
\leq &\frac{\mathcal{K}}{\|W\|^2}\sum_i w_{ii}(C+Cw_{ii}+Cw_{ii}^2)-\frac{\alpha\mathcal{K}}{h} \sum_k h_{;k} d_{\nu}\psi(\partial_k)\\
=&\frac{\mathcal{K}}{\|W\|^2}\sum_i w_{ii}(C+Cw_{ii}+Cw_{ii}^2)-\frac{\alpha\mathcal{K}}{h}\sum_i w_{ii}(\partial_i\cdot F) d_{\nu}\psi(\partial_i)\\
\leq &\frac{\psi}{\|W\|^2}\sum_i w_{ii}(C+Cw_{ii}^2)-\frac{\alpha\mathcal{K}}{h}\sum_i w_{ii} (\partial_i \cdot F) d_{\nu}\psi(\partial_i).
\end{split}\end{align}
For the last inequality, we used that $\mathcal{K}$ is bounded above and $\psi$ is bounded below (so the constant $C$ depends on $K_0,\varphi, t_1$).

Combining (\ref{eq: max}), (\ref{eq: 1}) and (\ref{eq: 3}) implies that
\begin{align}\begin{split}\label{eq:L10-1}
0\leq& -\frac{\psi}{\|W\|^2}\mathcal{K}^{ii}\sum_lw_{ll;i}^2-\frac{\psi}{\|W\|^2}\mathcal{K}^{ii}\sum_{p\ne q}w_{pq;i}^2\\
&+\frac{2\psi}{\|W\|^4}\mathcal{K}^{ii}\left(\sum_jw_{jj}w_{jj;i}\right)^2+\psi\mathcal{K}^{ii}w_{ii}^2-(n+1)\psi\mathcal{K}\sum_i\frac{w_{ii}^3}{\|W\|^2}\\
&+\frac{\psi}{\|W\|^2}\sum_{l}w_{ll}\left(\mathcal{K}^{pp,qq}w_{pp;l}w_{qq;l}-\mathcal{K}^{pp,qq}w_{pq;l}^2
-(2-\varepsilon)\frac{(\mathcal{K}_{;l})^2}{\mathcal{K}}\right)\\
&+\frac{\psi}{\|W\|^2}\sum_i w_{ii}(C+Cw_{ii}^2)-\frac{\alpha\mathcal{K}}{h}\sum_i w_{ii}(\partial_i\cdot F) d_{\nu}\psi(\partial_i)\\
&+(n-1)\frac{\alpha\psi\mathcal{K}}{h}+\frac{\alpha\mathcal{K}}{h}\sum_s(\partial_s\cdot F) d_F\psi(\partial_s)+\frac{\alpha\mathcal{K}}{h}\sum_iw_{ii}(\partial_i\cdot F) d_{\nu}\psi(\partial_i)\\
&-\frac{\alpha\psi}{h^2}\mathcal{K}^{ii}w_{ii}^2(\partial_i\cdot F)^2 -\alpha\psi\mathcal{K}^{ii}w_{ii}^2\\
\leq&\frac{\psi}{\|W\|^2}\left(\sum_lw_{ll}\left(C+Cw_{ll}^2\right)-n\mathcal{K}\sum_lw_{ll}^3+\mathcal{K}^{ii}w_{ii}^2\|W\|^2\right)\\
&+\alpha\psi\left(\frac{n\mathcal{K}}{h}-\mathcal{K}^{ii}w_{ii}^2-\frac{\mathcal{K}^{ii}w_{ii}^2(\partial_i\cdot F)^2}{h^2}+\frac{\mathcal{K}}{h\psi}\sum_s(\partial_s\cdot F) d_F\psi(\partial_s)\right)\\
&-\psi\sum_i\left(A_i+B_i+C_i+D_i-E_i\right)
-\frac{\alpha\psi\mathcal{K}}{h}-\psi\mathcal{K}\sum_i\frac{w_{ii}^3}{\|W\|^2},
\end{split}\end{align}
where $C$ depends on $\varepsilon,K_0,\varphi,t_1$, and
\[A_i=\frac{2-\varepsilon}{\|W\|^2\mathcal{K}} w_{ii}(\mathcal{K}_{;i})^2-\frac{w_{ii}}{\|W\|^2}\sum_{p,q}\mathcal{K}^{pp,qq}w_{pp;i}w_{qq;i},\]

\[B_i=\frac{2}{\|W\|^2}\sum_{j}w_{jj}\mathcal{K}^{jj,ii}w^2_{jj;i},\quad C_i=\frac{2}{\|W\|^2}\sum_{j\neq i}\mathcal{K}^{jj}w_{jj;i}^2,\]

\[D_i=\frac{1}{\|W\|^2}\mathcal{K}^{ii}\sum_j w_{jj;i}^2,\quad
E_i=\frac{2}{\|W\|^4}\mathcal{K}^{ii}\left(\sum_j w_{jj}w_{jj;i}\right)^2.\]

The terms $B_i$ and $C_i$ deserve some explanation. $C_i$ comes from the second term in \eqref{eq:L10-1}, which reads
\[-\frac{\psi}{\|W\|^2}\sum_i\mathcal{K}^{ii}\sum_{p\neq q}w_{pq;i}^2\leq -\frac{\psi}{\|W\|^2}\sum_{p\neq q}\mathcal{K}^{pp}w_{pq;p}^2-\frac{\psi}{\|W\|^2}\sum_{p\neq q}\mathcal{K}^{qq}w_{pq;q}^2, \]
which is exactly $C_i$ due to the Codazzi equation.

The third line of \eqref{eq:L10-1} arises from \eqref{eq: final-1}.
 Since the second term in the bracket of \eqref{eq: final-1} is negative and the hypersurface is convex, we can proceed in the same way as we derived $C_i$ and just throw away all indices $i$ which are neither $p$ nor $q$. This gives term $B_i$. The first term in the big bracket goes into $A_i$.

In Corollary \ref{Alternative} of the appendix we will present an adaption of the method developed in \cite{Guan} to deal with the curvature derivative terms $A_i,B_i,C_i,D_i,E_i$. There we prove that we obtain the following alternative: There exist positive numbers $\delta_2,\dots,\delta_n$ which only depend on the dimension and bounds on the Gauss curvature, such that either
\[w_{ii}> \delta_iw_{11}\quad\forall 2\leq i\leq n\]
or
\[A_i+B_i+C_i+D_i-E_i\geq 0\quad\forall 1\leq i\leq n.\]
By taking $\alpha$ large in \eqref{eq: max1}, in the first case we get a contradiction to the bound on the Gauss curvature. In the second case, using also $\mathcal{K}^{ii}w_{ii}^2 = \mathcal{K} \sum_i w_{ii}$, \eqref{eq:L10-1} yields

\begin{align*}
0 \leq& \frac{\psi}{\|W\|^2}\left(\sum_l w_{ll}(C + Cw_{ll}^2) - n \mathcal{K} \sum_l w_{ll}^3\right)- (\alpha-1) \mathcal{K} \psi \sum_i w_{ii} \\
& + \alpha\psi \left((n-1)\frac{\mathcal{K}}{h} - \frac{\mathcal{K}}{h^2} \sum_{i} w_{ii} (\partial_i \cdot F)^2 + \frac{\mathcal{K}}{h\psi} \sum_l  (\partial_l \cdot F) d_F \psi (\partial_l) \right).
\end{align*}
Consequently we obtain
\[0\leq \frac{C(\varepsilon,K_0,\varphi,t_1)w_{11}^3}{\|W\|^2} - (\alpha-1) \mathcal{K}\psi w_{11} + C(K_0,\varphi,t_1) \alpha,\]
where we discarded $-(\alpha-1)\mathcal{K}\psi \sum_{i\neq 1} w_{ii} \leq 0$ and used the bounds on $h, \psi$ and $\mathcal{K}$ to bound $w_{11}$ in terms of $w_{11}^3$.

Now take $\alpha$ such that $(\alpha-1) \mathcal{K}\psi \geq C(\varepsilon,K_0,\varphi,t_1)+1$. Therefore, in view of (\ref{eq: max1})
\begin{equation}\label{x}
\begin{split}
0 &\leq \frac{C(\varepsilon,K_0,\varphi,t_1)w_{11}^3}{\|W\|^2} - (\alpha-1) \mathcal{K}\psi w_{11} + C(K_0,\varphi,t_1)\alpha \\
&\leq C(\varepsilon,K_0,\varphi,t_1)\left(\frac{w_{11}^2}{\|W\|^2} - 1\right) w_{11} - w_{11} + C(K_0,\varphi,t_1)\alpha \\
&\leq -\frac{c_1^{\alpha}}{\sqrt{n}} + C(K_0,\varphi,t_1)\alpha.
\end{split}
\end{equation}
Taking $\alpha$ large enough yields a contradiction.
\end{proof}
\begin{proposition}\label{prop: expansion to infty}
The solution to (\ref{e: flow0}) satisfies $\lim\limits_{t\to T}\max h_{K_t}=\infty.$
\end{proposition}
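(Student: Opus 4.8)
The plan is to argue by contradiction: suppose the support functions stay uniformly bounded above along the flow, say $h_{K_t}\le c_2$ for all $t\in[0,T)$. The first step is to upgrade this to a two-sided bound $c_1\le h_{K_t}\le c_2$ for some $c_1>0$ depending only on the data and on $T$. To get the lower bound, I would combine the inclusion $K_t\subset c_2 B$ with a control on the volume (or on some affine/isoperimetric-type quantity) of $K_t$ from below; alternatively, one can run the polar-body argument: the polar bodies $K_t^\ast$ satisfy a similar expanding-type flow (as recorded in the proof of Lemma~\ref{lem: upper}), and a diameter bound on $K_t^\ast$ — equivalently a lower bound on $h_{K_t}$ — follows from a Tso-type upper estimate on the polar speed together with the already-available upper bound $h_{K_t^\ast}\le 1/c_1'$. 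The cleanest route, since this is exactly the setting of Lemmas~\ref{lem: lower}, \ref{lem: upper}, \ref{lem: final}, is: a uniform upper bound on $h$ plus a uniform lower bound on $h$ (which one first needs to extract) makes all the a priori estimates of Section~\ref{sec:long-time-existence} available.

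**Second step: once $c_1\le h_{K_t}\le c_2$ is in hand.** Then Lemmas~\ref{lem: lower} and \ref{lem: upper} give $c_3\le\mathcal{K}\le c_4$, and Lemma~\ref{lem: final} gives uniform two-sided bounds $c_5\le\kappa_i\le c_6$ on the principal curvatures on $[0,T)$. Consequently the flow equation \eqref{eq: flow4} is uniformly parabolic with speed bounded away from $0$ and $\infty$, and the support functions $h(\cdot,t)$ lie in a bounded set in $C^2(\mathbb{S}^n)$. Krylov–Safonov (or Evans–Krylov) theory then yields uniform $C^{2,\alpha}$ bounds, and bootstrapping via Schauder estimates gives uniform $C^{k}$ bounds for all $k$, with constants independent of $t\in[0,T)$. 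In particular, if $T<\infty$ the solution extends smoothly past $T$, contradicting maximality of $T$.

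**Third step: ruling out $T=\infty$ under the boundedness hypothesis.** It remains to handle the possibility $T=\infty$ with $h$ bounded above. Here I would use the monotonicity built into the flow: since the speed in \eqref{eq: flow4} is $\varphi h^{2-p}S_n>0$, the support function is pointwise strictly increasing in $t$, so $h(\cdot,t)\nearrow h_\infty(\cdot)\le c_2$. With the uniform $C^k$ bounds from the previous step, $\partial_t h = \varphi h^{2-p}S_n$ is uniformly bounded below by a positive constant (because $S_n\ge c_5^n$ and $h\ge c_1$), forcing $h(\cdot,t)\to\infty$, which contradicts $h\le c_2$. Hence the boundedness hypothesis is untenable and $\max_{\mathbb{S}^n} h_{K_t}\to\infty$ as $t\to T$ (with, in fact, $T=\infty$).

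**Main obstacle.** The one genuinely nontrivial point is the \emph{first} step — producing the uniform \emph{lower} bound $h_{K_t}\ge c_1>0$ from the assumed upper bound alone. Without it, neither the curvature estimates nor the polar-body machinery apply. The natural device is the polar transformation: a uniform upper bound on $h_{K_t}$ is a uniform \emph{lower} bound on $h_{K_t^\ast}$, and one must then produce a uniform \emph{upper} bound on $h_{K_t^\ast}$ (i.e.\ a lower bound on $h_{K_t}$) by a Tso-type argument applied to the polar flow, exactly as in the proof of Lemma~\ref{lem: upper} but with the roles of the auxiliary barriers reversed, together with the fact that $h$ is increasing so $h_{K_t}$ cannot collapse faster than its initial value would allow. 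Once this lower bound is secured, everything else is an application of the estimates already proved.
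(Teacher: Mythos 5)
Your skeleton for the finite-time case is the paper's: assume $h_{K_t}\le c_2$ on $[0,T)$, apply Lemmas \ref{lem: lower}, \ref{lem: upper}, \ref{lem: final} to get uniform parabolicity, and extend past $T$ via Krylov--Safonov, contradicting maximality. But what you single out as ``the one genuinely nontrivial point'' is not one: since the speed $\varphi h^{2-p}S_n$ in (\ref{eq: flow4}) is strictly positive, $h(u,t)$ is nondecreasing in $t$, so $h(u,t)\ge h(u,0)\ge \min_{\mathbb{S}^n}h_{K_0}>0$ and the lower bound is free. (The same monotonicity is also what makes ``$\max h_{K_t}$ does not tend to infinity'' equivalent to ``$h$ is bounded on $[0,T)$'', which your contradiction setup tacitly uses.) The polar-body/Tso detour you sketch for this step is unnecessary and, as written, is never actually carried out.

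The genuine gap is in your third step. You assert that the uniform estimates make $\partial_t h=\varphi h^{2-p}S_n$ ``uniformly bounded below by a positive constant'' on $[0,\infty)$. They do not: the constants $c_4$ of Lemma \ref{lem: lower} and $c_5,c_6$ of Lemma \ref{lem: final} depend on $t_1$ (indeed $c_4$ grows linearly in $t_1$), so on an infinite time interval you only obtain a speed bound of the form $\partial_t h\ge C/(1+t)$. The conclusion survives, because $\int_0^\infty dt/(1+t)$ diverges and $h$ is still forced to infinity, but you must say this; the uniform constant you invoke is not available, and for the same reason your claim in step two that the $C^{2,\alpha}$ bounds are ``independent of $t\in[0,T)$'' fails when $T=\infty$. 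The paper sidesteps all of this: $T=\infty$ occurs only for $p\ge n+1$, and there one simply encloses an explicitly expanding ball $B_{r(t)}\subseteq K_t$ with $r(t)\to\infty$, giving $\max h_{K_t}\ge r(t)\to\infty$ with no curvature estimates at all; for $p<n+1$ the same inner barrier blows up in finite time, which is why $T<\infty$ in that case and only the extension argument is needed.
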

\begin{proof}
First, let $p\geq n+1.$ In this case, by comparing with suitable outer balls, the flow exists on $[0,\infty).$
For $p>n+1,$ consider an origin centered ball $B_r$, such that $K_0\supseteq B_r.$ Then $K_t\supseteq B_{r(t)},$ where $$r(t)=\left((\min h_{K_0})^{p-n-1}+t(p-n-1)\min \varphi \right)^{\frac{1}{p-n-1}}$$ and $B_{r(t)}$ expands to infinity as $t$ approaches $\infty$. For $p=n+1$, $K_t\supseteq B_{r(t)}$ with $r(t)=e^{t\min\varphi}\min h_{K_0}$ and $B_{r(t)}$ expands to infinity as $t$ approaches $\infty$.

Second, if $p<n+1$, then the flow exists only on a finite time interval. If $\max h_{K_t}<\infty$, then by Lemmas \ref{lem: lower}, \ref{lem: upper} and \ref{lem: final}, the evolution equation (\ref{e: flow0}) is uniformly parabolic on $[0,T)$. Thus, the result of Krylov and Safonov \cite{Krylov-Safonov} and standard parabolic theory allow us to extend the solution smoothly past time $T$, contradicting its maximality.
\end{proof}

\section{convergence of normalized solutions}\label{sec: nor conv}
\subsection{Convergence in \texorpdfstring{$C^{1}, 1\neq p>-n-1$}{}}\label{sec}
By the proof of \cite[Corollary 7.5]{Ivaki2014-gauss}, there exist $r,R$ such that
\begin{align}\label{ratio}
0<r\leq h_{\tilde{K}_t}\leq R<\infty.
\end{align}
Therefore, a subsequence of $\{\tilde{K}_{t_k}\}$ converges in the Hausdorff distance to a limiting shape $\tilde{K}_{\infty}$ with the origin in its interior. The argument of \cite[Section 8.1]{Ivaki2014-gauss} implies  $$\varphi h_{\tilde{K}_{\infty}}^{1-p}f_{\tilde{K}_{\infty}}=c,$$ where $f_{\tilde{K}_{\infty}}$ is the positive continuous curvature function of $\tilde{K}_{\infty}$ and $c$ is some positive constant. By \cite[Proposition 1.2]{39}, $\tilde{K}_{\infty}$  is smooth and strictly convex. The $C^1$-convergence follows, which is purely geometric and does not depend on the evolution equation, from \cite[Lemma 13]{Andrews1997}.
\begin{remark}\label{rem}
Section \ref{sec} completes the discussion on the existence of solutions to the smooth, even $L_p$-Minkowski problems in $\mathbb{R}^{n+1}$ for $1\neq p>-n-1.$ The next section discusses  the $C^{\infty}$ convergence when $1\neq p\leq n+1$, and also when $p=1$ and solutions are origin-symmetric. We mention that in the latter case, by the proof of \cite[Corollary 7.5]{Ivaki2014-gauss}, the estimate (\ref{ratio}) still holds.
\end{remark}

\subsection{Convergence in \texorpdfstring{$C^{\infty}$}{}}\label{convergence-1}

By \cite[Lemma 9.2]{Ivaki2014-gauss}, there is a uniform upper bound on the Gauss curvature of the normalized solution when $p\leq n+1.$
In the following, we first obtain a uniform lower bound on the Gauss curvature of the normalized solution $\tilde{K}_t$.

Let $h:\mathbb{S}^{n}\times[0,T)\to\mathbb{R}^{n+1}$ be a solution of equation (\ref{eq: flow4}). Then for each $\lambda >0$, $\bar{h}$ defined by
\begin{align*}
\bar{h}&:\mathbb{S}^{n}\times\left[0,T/\lambda^{\frac{1+n-p}{n+1}}\right)\to\mathbb{R}^{n+1}\\
\bar{h}(u,t)&=\lambda^{\frac{1}{n+1}} h\left(u,\lambda^{\frac{1+n-p}{n+1}}t\right)
\end{align*}
is also a solution of evolution equation (\ref{eq: flow4}) but with the initial data $\lambda^{\frac{1}{n+1}} h\left(\cdot,0\right).$

For each \emph{fixed} time $t\in[0,T),$ define $\bar{h}$ a solution of (\ref{eq: flow4}) as follows
\begin{align*}
\bar{h}(u,\tau)=\left(\frac{V(B)}{V(K_t)}\right)^{\frac{1}{n+1}}h\left(u,
t+\left(\frac{V(B)}{V(K_t)}\right)^{\frac{1+n-p}{n+1}}\tau\right).
\end{align*}
Note that $\bar{h}(\cdot,0)$ is the support function of $\left(V(B)/V(K_t)\right)^{\frac{1}{n+1}}K_t$; therefore,
\[r\leq \bar{h}(u,0)\leq R.\]
Write $\bar{K}_{\tau}$ for the convex body associated with $\bar{h}(\cdot,\tau)$ and let $B_c$ denote the ball of radius $c$ centered at the origin. Since $B_{R}$ encloses
$\bar{K}_0,$ the comparison principle implies that $B_{2R}$
will enclose $\bar{K}_{\tau}$ for $\tau\in[0,\delta],$
where $\delta$ depends only on $p,R,\psi$. By the first statement of Lemma \ref{lem: upper} applied to $\bar{h}$, there is a uniform lower bound (depending only on $r,R,p,\varphi$) on the Gauss curvature of $\bar{K}_{\frac{\delta}{2}}.$

On the other hand, the volume of $\bar{K}_{\frac{\delta}{2}}$ is bounded above by $V(B_{2R});$ therefore,
\[\displaystyle\frac{V(B)}{V(B_{2R})}\leq c_t:=\frac{V(K_t)}{V\left(K_{t+\left(\frac{V(B)}{V(K_t)}\right)^{\frac{1+n-p}{n+1}}\frac{\delta}{2}}\right)}\leq 1\]
for all $t\in [0,T)$. Consequently,
\begin{align*}
\left(\frac{V(B)}{V\left(K_{t+\left(\frac{V(B)}{V(K_t)}\right)^{\frac{1+n-p}{n+1}}\frac{\delta}{2}}\right)}\right)^{\frac{1}{n+1}}h\left(u,
t+\left(\frac{V(B)}{V(K_t)}\right)^{\frac{1+n-p}{n+1}}\frac{\delta}{2}\right)
=c_t^{\frac{1}{n+1}}\bar{h}\left(\cdot,\frac{\delta}{2}\right)
\end{align*}
has Gauss curvature bounded below for all $t\in [0,T)$.

Now we show that for every
 $\tilde{t}\in\left[\left(V(B)/V(K_0)\right)^{\frac{1+n-p}{n+1}}\frac{\delta}{2},T\right)$, we can find $t\in[0,T)$ such that
\[\tilde{t}=t+\left(\frac{V(B)}{V(K_t)}\right)^{\frac{1+n-p}{n+1}}\frac{\delta}{2}.\]
Define $f(t)=t+\left(\frac{V(B)}{V(K_t)}\right)^{\frac{1+n-p}{n+1}}\frac{\delta}{2}-\tilde{t}$ on $[0,T)$.
$f$ is continuous, and
\[\left\{
    \begin{array}{ll}
      f(T)=T-\tilde{t}>0, & p<n+1 \\
      f(\infty)=\infty, & p= n+1 \\
      f(0)\leq 0& p\leq  n+1 .
    \end{array}
  \right.
\]
The claim follows.

Next we obtain uniform lower and upper bounds on the principal curvatures of the normalized solution.

Consider the convex bodies $\tilde{K}_{\tau}:=\left(\frac{V(B)}{V(K_t)}\right)^{\frac{1}{n+1}}K_t,$ where
$$\tau(t):=\int_0^t \left(\frac{V(K_s)}{V(B)}\right)^{\frac{1+n-p}{n+1}}ds,\footnote{Suppose $p<n+1.$ For each $t\in [0,T)$ by the comparison principle we have
\[\frac{(\max h_{K_t})^{p-n-1}}{(n+1-p)\max \varphi}\leq T-t\leq \frac{(\min h_{K_t})^{{p-n-1}}}{(n+1-p)\min \varphi}.\]
Therefore, since $\frac{\max h_{K_t}}{\min h_{K_t}}\leq \frac{R}{r}$ (see (\ref{ratio})), we get
\begin{align*}
c_1(T-t)^{\frac{1}{p-n-1}}\leq \min h_{K_t} \leq \left(\frac{V(K_t)}{V(B)}\right)^{\frac{1}{n+1}}\leq \max h_{K_t}\leq c_2(T-t)^{\frac{1}{p-n-1}}.
\end{align*}
Thus $\lim_{t\to T}\tau(t)=\infty.$}$$
Let us furnish all geometric quantities associated with $\tilde{K}_{\tau}$ by an over-tilde.
The evolution equation of $\tilde{h}_{\tau}$ is given by
\[\partial_\tau \tilde{h}_{\tau}=\varphi \tilde{h}^{2-p}\tilde{S}_n-\frac{\int_{\mathbb{S}^{n}}\varphi \tilde{h}^{2-p}\tilde{S}_n^2d\sigma}{(n+1)V(B)}\tilde{h}.\]
Since $\frac{\int_{\mathbb{S}^{n}}\varphi \tilde{h}^{2-p}\tilde{S}_n^2d\sigma}{(n+1)V(B)}$ is uniformly bounded above,
applying the maximum principle to
$\Theta=\frac 12\log(\|\tilde{W}\|^2)-\alpha\log \tilde{h},$
and arguing as in the proof of Lemma \ref{lem: final}, we see that $\|\tilde{W}\|$ has a uniform upper bound. This in turn, in view of our lower and upper bounds on the Gauss curvature of $\tilde{K}_{\tau}$, implies that we have uniform lower and upper bounds on the principal curvatures of $\tilde{K}_{\tau}$. Higher order regularity estimates and convergence in $C^{\infty}$ for a subsequence of $\{\tilde{K}_{\tau}\}$ follow from Krylov-Safonov \cite{Krylov-Safonov}, standard parabolic theory and the Arzel\`{a}-Ascoli theorem. The convergence for the full sequence when $p\geq 1$ follows from the uniqueness of the self-similar solutions to (\ref{def: self similar}); see \cite{Lu1,39}. Moreover, note that when $\varphi\equiv1$ and $-n-1<p<1$, by the result of \cite{Choi}, the limit is the unit sphere.
\section{Appendix}\label{appendix}
\subsection*{Evolution of polar bodies}
Let $K$ be a smooth, strictly convex body with the origin in its interior. Suppose $\partial K$, the boundary of $K,$ is parameterized by the radial function $r=r(u):\mathbb{S}^n\to \mathbb{R}.$
The metric $[g_{ij}]$, unit normal $\nu$, support function $h$, and the second fundamental form $[w_{ij}]$ of $\partial K$
can be written in terms of $r$ and its partial derivatives as follows:
\begin{description}
  \item[a] $ g_{ij}=r^2\bar{g}_{ij}+\bar{\nabla}_ir\bar{\nabla}_jr,$
  \item[b] $ \nu=\frac{r u-\bar{\nabla}r}{\sqrt{r^2+\|\bar{\nabla}r\|^2}},$
  \item[c] $ h=\frac{r^2}{\sqrt{r^2+\|\bar{\nabla}r\|^2}},$
  \item[d] $ w_{ij}=\frac{-r\bar{\nabla}^{2}_{ij}r+2\bar{\nabla}_ir\bar{\nabla}_jr+
  r^2\bar{g}_{ij}}{\sqrt{r^2+\|\bar{\nabla}r\|^2}}.$
\end{description}
Since $\frac{1}{r}$ is the support function of $K^{\ast}$ (see, e.g., \cite[page 57]{Schneider}), we can calculate the entries of $[\mathfrak{r}^{\ast}_{ij}]$:
\[\mathfrak{r}^{\ast}_{ij}=\bar{\nabla}^{2}_{ij}\frac{1}{r}+\frac{1}{r}\bar{g}_{ij}=
\frac{-r\bar{\nabla}^2_{ij}r+2\bar{\nabla}_ir\bar{\nabla}_jr+r^2\bar{g}_{ij}}{r^3}.\]
Thus, using (\textbf{d}) we get
\begin{align*}
\mathfrak{r}^{\ast}_{ij}=\frac{\sqrt{r^2+\|\bar{\nabla}r\|^2}}{r^3}w_{ij}.
\end{align*}
\begin{lemma}\label{app1}
As $K_t$ evolve by (\ref{eq: flow4}), their polars $K_t^{\ast}$ evolve as follows:
\[\partial_th^{\ast}=-\varphi\left(\frac{h^{\ast}u+\bar{\nabla} h^{\ast}}{\sqrt{h^{\ast2}+|\bar{\nabla} h^{\ast}|^2}}\right)\frac{(h^{\ast2}+|\bar{\nabla} h^{\ast}|^2)^{\frac{n+1+p}{2}}}{h^{\ast n+1}S_n^{\ast}},~~h^{\ast}(\cdot,t):=h_{K_t^{\ast}}(\cdot).\]
\end{lemma}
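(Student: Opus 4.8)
The plan is to translate the evolution of the support function $h=h_{K_t}$ into an evolution of the radial function $r=r_{K_t}$, and then to use the classical identity $h^{\ast}=1/r$ (the support function of the polar body is the reciprocal of the radial function; see \cite[page~57]{Schneider}) together with the formulas (\textbf{a})--(\textbf{d}) recorded above. First I would compute the evolution of $r$: since the time derivative of the support function at a fixed normal direction records the normal component of the velocity, a hypersurface moving by (\ref{eq: flow4}) has normal speed $\varphi(\nu)h^{2-p}S_n$ at the boundary point with outer normal $\nu$. Writing that boundary point, in the radial direction $u\in\mathbb{S}^{n}$, as $r(u)u$, its velocity is $(\partial_t r)u$, whose normal component is $(\partial_t r)(u\cdot\nu)$. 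By (\textbf{b}) and $u\cdot\bar\nabla r=0$ one gets $u\cdot\nu=r/\sqrt{r^2+\|\bar\nabla r\|^2}=h/r$ via (\textbf{c}); hence
\[\partial_t r=\frac{\varphi(\nu)\,h^{2-p}S_n}{u\cdot\nu}=r\,\varphi(\nu)\,h^{1-p}S_n,\]
so that $\partial_t h^{\ast}=-r^{-2}\partial_t r=-r^{-1}\varphi(\nu)\,h^{1-p}S_n$.

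Next I would rewrite each quantity on the right in terms of $h^{\ast}$ and $\bar\nabla h^{\ast}$. From $r=1/h^{\ast}$ one has $\bar\nabla r=-r^2\bar\nabla h^{\ast}$, hence $r^2+\|\bar\nabla r\|^2=r^4(h^{\ast 2}+|\bar\nabla h^{\ast}|^2)$; plugging this into (\textbf{c}) gives $h=(h^{\ast 2}+|\bar\nabla h^{\ast}|^2)^{-1/2}$, and into (\textbf{b}) gives $\nu=(h^{\ast}u+\bar\nabla h^{\ast})/\sqrt{h^{\ast 2}+|\bar\nabla h^{\ast}|^2}$, which is precisely the argument of $\varphi$ in the asserted formula. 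For $S_n$ I would combine the three identities already available in this appendix: from (\textbf{a}), $\det[g_{ij}]=r^{2n-2}(r^2+\|\bar\nabla r\|^2)\det[\bar g_{ij}]$; the expression (\textbf{d}) for $w_{ij}$; and the relation $\mathfrak{r}^{\ast}_{ij}=\frac{\sqrt{r^2+\|\bar\nabla r\|^2}}{r^3}w_{ij}$ derived above. Since $\mathcal{K}=\det[w_{ij}]/\det[g_{ij}]$ (so $S_n=\det[g_{ij}]/\det[w_{ij}]$) and $S_n^{\ast}=\det[\mathfrak{r}^{\ast}_{ij}]/\det[\bar g_{ij}]$, the determinants of $w_{ij}$ cancel and one obtains the clean identity
\[S_n\,S_n^{\ast}=\frac{(r^2+\|\bar\nabla r\|^2)^{\frac{n+2}{2}}}{r^{n+2}}.\]

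To finish, I would substitute $h^{1-p}=(h^{\ast 2}+|\bar\nabla h^{\ast}|^2)^{(p-1)/2}$ and $S_n=(r^2+\|\bar\nabla r\|^2)^{(n+2)/2}/(r^{n+2}S_n^{\ast})$ into $\partial_t h^{\ast}=-r^{-1}\varphi(\nu)h^{1-p}S_n$, and then use $r=1/h^{\ast}$ together with $r^2+\|\bar\nabla r\|^2=r^4(h^{\ast 2}+|\bar\nabla h^{\ast}|^2)$. The powers of $r$ collapse to $r^{n+1}=(h^{\ast})^{-(n+1)}$, the powers of $(h^{\ast 2}+|\bar\nabla h^{\ast}|^2)$ add up to the exponent $(n+1+p)/2$, and the formula
\[\partial_t h^{\ast}=-\varphi\!\left(\frac{h^{\ast}u+\bar\nabla h^{\ast}}{\sqrt{h^{\ast 2}+|\bar\nabla h^{\ast}|^2}}\right)\frac{(h^{\ast 2}+|\bar\nabla h^{\ast}|^2)^{\frac{n+1+p}{2}}}{h^{\ast\, n+1}S_n^{\ast}}\]
drops out.

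The only genuinely delicate point is the bookkeeping for $S_n$ in the middle step: one must keep track of the fact that $S_n$ and $S_n^{\ast}$ are read off in different parametrizations (radial for $\partial K_t$, Gaussian for $\partial K_t^{\ast}$), so that the Jacobian factor $\det[g_{ij}]$ coming from (\textbf{a}) and the scaling factor in $\mathfrak{r}^{\ast}_{ij}=\frac{\sqrt{r^2+\|\bar\nabla r\|^2}}{r^3}w_{ij}$ must be combined correctly; once the relation $S_n S_n^{\ast}=(r^2+\|\bar\nabla r\|^2)^{(n+2)/2}r^{-(n+2)}$ is in hand, the rest is an elementary substitution and collection of exponents.
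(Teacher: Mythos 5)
Your argument is correct and follows essentially the same route as the paper: pass to the radial function $r=1/h^{\ast}$, use the identities (\textbf{a})--(\textbf{d}) together with $\mathfrak{r}^{\ast}_{ij}=\frac{\sqrt{r^2+\|\bar{\nabla}r\|^2}}{r^3}w_{ij}$, and collect exponents. The only cosmetic differences are that you derive $\partial_t r$ from the normal speed divided by $u\cdot\nu$ (the paper simply states the equivalent formula) and that you package the determinant bookkeeping into the single identity $S_nS_n^{\ast}=(r^2+\|\bar{\nabla}r\|^2)^{\frac{n+2}{2}}r^{-(n+2)}$, where the paper chains the same determinant ratios term by term.
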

\begin{proof}
To obtain the evolution equation of $h_{K_t^{\ast}}$, we first need to parameterize $M_t$ over the unit sphere
\[F=r(u(\cdot,t),t)u(\cdot,t):\mathbb{S}^{n}\to\mathbb{R}^{n+1},\]
where $r(u(\cdot,t),t)$ is the radial function of $M_t$ in the direction $u(\cdot,t).$
Note that
\[\partial_t r=\varphi \frac{h^{2-p}}{\mathcal{K}}\frac{\sqrt{r^2+\|\bar{\nabla}r\|^2}}{r},\]
and
\[\mathcal{K}=\frac{\det w_{ij}}{\det g_{ij}},\quad \frac{1}{S_n^{\ast}}=\frac{\det \bar{g}_{ij}}{\det \mathfrak{r}_{ij}^{\ast}},\quad \frac{\det \bar{g}_{ij}}{\det g_{ij}}=\frac{1}{r^{2n-2}(r^2+\|\bar{\nabla}r\|^2)},\]
\[\quad h=\frac{1}{\sqrt{h^{\ast 2}+\|\bar{\nabla}h^{\ast}\|^2}}.\]
Now we calculate
\begin{align*}
\partial_t h^{\ast}&=\partial_t \frac{1}{r}\\
&=- \frac{h^{2-p}}{\mathcal{K}}\frac{\sqrt{r^2+\|\bar{\nabla}r\|^2}}{r^3}\varphi(\nu)\\
&=-h^{2-p}\frac{\sqrt{r^2+\|\bar{\nabla}r\|^2}}{r^3}\frac{\det g_{ij}}{\det w_{ij}}\varphi(\nu)\\
&=-h^{2-p}\frac{\sqrt{r^2+\|\bar{\nabla}r\|^2}}{r^3}\frac{\det \bar{g}_{ij}}{\det \mathfrak{r}_{ij}^{\ast}}\frac{\det g_{ij}}{\det \bar{g}_{ij}}\frac{\det \mathfrak{r}_{ij}^{\ast}}{\det w_{ij}}\varphi(\nu)\\
&=-\left(\frac{\sqrt{r^2+\|\bar{\nabla}r\|^2}}{r^3}\right)^{n+1}\frac{r^{2n-2}(r^2+\|\bar{\nabla}r\|^2)}{(h^{\ast 2}+\|\bar{\nabla}h^{\ast}\|^2)^{\frac{2-p}{2}}}\frac{\varphi(\nu)}{S_n^{\ast}}.
\end{align*}
Replacing $r$ by $1/h^{\ast}$ and taking into account (\textbf{b}) finishes the proof.
\end{proof}

\subsection*{Estimates for curvature derivatives}

For convenience we present some of the main ideas, how one can prove the alternative in Lemma \ref{lem: final} about balancing the curvature derivatives. This method was used in \cite{Guan} for a similar stationary prescribed curvature equation. Recall that
\[A_i=\frac{2-\varepsilon}{\|W\|^2\mathcal{K}} w_{ii}(\mathcal{K}_{;i})^2-\frac{w_{ii}}{\|W\|^2}\sum_{p,q}\mathcal{K}^{pp,qq}w_{pp;i}w_{qq;i},\]

\[B_i=\frac{2}{\|W\|^2}\sum_{j}w_{jj}\mathcal{K}^{jj,ii}w^2_{jj;i},\quad C_i=\frac{2}{\|W\|^2}\sum_{j\neq i}\mathcal{K}^{jj}w_{jj;i}^2,\]

\[D_i=\frac{1}{\|W\|^2}\mathcal{K}^{ii}\sum_j w_{jj;i}^2,\quad
E_i=\frac{2}{\|W\|^4}\mathcal{K}^{ii}\left(\sum_j w_{jj}w_{jj;i}\right)^2.\]

Note that the term $A_i$ looks slightly different from the term $A_i$ in \cite[p. 1309]{Guan}, where the $\mathcal{K}$ is not present in the denominator. We have to define $A_i$ in the way we did, because due to the inverse nature of the curvature flow equation we obtain an extra good derivative term. This allows us to choose the constant in $A_i$ as $2 - \varepsilon$, whereas a large constant was required in \cite{Guan} (denoted by $K$ there). Fortunately the proofs of \cite[Lemma~4.2, Lemma~4.3]{Guan} also work for sufficiently small $\varepsilon$. The remaining terms $B_i,C_i,D_i,E_i$ are all identical to those in \cite{Guan}.

In the following $\sigma_k$ denotes the $k$-th elementary symmetric function of principal curvatures. We begin by recalling the following special case ($k=n$) of inequality (2.4) from \cite[Lemma 2.2]{Guan}, which can be deduced easily by differentiating
\[ G=\left(\frac{\sigma_n}{\sigma_l}\right)^{\frac{1}{n-l}}\] twice, using the concavity of $G$ and applying the Schwarz inequality.
For any $\delta>0$, $1\leq i\leq n$ and $1\leq l<n$ we have
\begin{align*}
-&\mathcal{K}^{pp,qq}w_{pp;i}w_{qq;i}+\left(1-\frac{1}{n-l}+\frac{1}{(n-l)\delta}\right)\frac{(\mathcal{K}_{;i})^2}{\mathcal{K}}\geq \\
&\left(1+\frac{1-\delta}{n-l}\right)\frac{\mathcal{K}((\sigma_l)_{;i})^2}{\sigma_l^2}-\frac{\mathcal{K}}{\sigma_l}\sigma_l^{pp,qq}w_{pp;i}w_{qq;i}.
\end{align*}
In particular, by taking $\delta=\frac{1}{2-\varepsilon}$, we have
\begin{align}\begin{split}\label{key ineq}
(2-\varepsilon)\frac{(\mathcal{K}_{;i})^2}{\mathcal{K}}-\mathcal{K}^{pp,qq}w_{pp;i}w_{qq;i}&\geq \left[1+\frac{1-\varepsilon}{(n-1)(2-\varepsilon)}\right]\frac{\mathcal{K}((\sigma_l)_{;i})^2}{\sigma_l^2}\\
					&\hphantom{=}-\frac{\mathcal{K}\sigma_l^{pp,qq}w_{pp;i}w_{qq;i}}{\sigma_l},
\end{split}\end{align}
provided $(2-\varepsilon) > 1$, i.e. $0 < \varepsilon < 1$.

\begin{lemma}\label{app2}
For each $i\ne 1,$ if $\sqrt{3}\kappa_i\leq \kappa_1,$ we have
\[A_i+B_i+C_i+D_i-E_i\geq 0.\]
\end{lemma}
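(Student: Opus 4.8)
The plan is to work in the frame diagonalizing $W$ (as in the proof of Lemma~\ref{lem: final}), to write $A_i,\dots,E_i$ explicitly, and to notice that among $A_i,B_i,C_i,D_i$ and $-E_i$ only the last is not manifestly nonnegative; the whole point is then to absorb $E_i$ into the first four, and the factor $\sqrt3$ is precisely what makes this possible.

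So first I would record, using $\mathcal{K}=\sigma_n$ and hence $\mathcal{K}^{ii}=\mathcal{K}/\kappa_i$, $\mathcal{K}^{ii,jj}=\mathcal{K}/(\kappa_i\kappa_j)$ for $i\ne j$, $\mathcal{K}^{ii,ii}=0$, that $C_i=\frac{2\mathcal{K}}{\|W\|^2}\sum_{j\ne i}\frac{w_{jj;i}^2}{\kappa_j}\ge0$ and
\[B_i=\frac{2\mathcal{K}}{\|W\|^2\kappa_i}\sum_{j\ne i}w_{jj;i}^2,\qquad D_i=\frac{\mathcal{K}}{\|W\|^2\kappa_i}\sum_{j}w_{jj;i}^2,\qquad E_i=\frac{2\mathcal{K}}{\|W\|^4\kappa_i}\Big(\sum_{j}\kappa_jw_{jj;i}\Big)^2.\]
For $A_i$, from $\mathcal{K}_{;i}=\mathcal{K}\sum_{p}w_{pp;i}/\kappa_p$ and $\sum_{p,q}\mathcal{K}^{pp,qq}w_{pp;i}w_{qq;i}=\mathcal{K}\big[(\sum_{p}w_{pp;i}/\kappa_p)^2-\sum_{p}w_{pp;i}^2/\kappa_p^2\big]$ one gets
\[A_i=\frac{\kappa_i\mathcal{K}}{\|W\|^2}\Big[(1-\varepsilon)\Big(\sum_{p}\frac{w_{pp;i}}{\kappa_p}\Big)^2+\sum_{p}\frac{w_{pp;i}^2}{\kappa_p^2}\Big],\]
which is exactly the ``extra good term'' coming from the inverse nature of the flow, and where $0<\varepsilon<1$ is used; one may alternatively invoke \eqref{key ineq} with $l=1$. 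Keeping only the $p=i$ summand (and dropping the first square, since $\varepsilon<1$) yields the crude but crucial bound $A_i\ge\frac{\mathcal{K}}{\|W\|^2\kappa_i}w_{ii;i}^2$.

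Then I would estimate $E_i$ by two Cauchy--Schwarz steps. Splitting $\sum_{j}\kappa_jw_{jj;i}=\kappa_iw_{ii;i}+\sum_{j\ne i}\kappa_jw_{jj;i}$, using $(a+b)^2\le3a^2+\tfrac32 b^2$, and then $\big(\sum_{j\ne i}\kappa_jw_{jj;i}\big)^2\le\big(\sum_{j\ne i}\kappa_j^2\big)\big(\sum_{j\ne i}w_{jj;i}^2\big)\le\|W\|^2\sum_{j\ne i}w_{jj;i}^2$, gives
\[E_i\le\frac{6\kappa_i\mathcal{K}}{\|W\|^4}w_{ii;i}^2+\frac{3\mathcal{K}}{\|W\|^2\kappa_i}\sum_{j\ne i}w_{jj;i}^2.\]
The second term here is exactly the $j\ne i$ part of $B_i+D_i$; combining the first term with $A_i\ge\frac{\mathcal{K}}{\|W\|^2\kappa_i}w_{ii;i}^2$ and the $j=i$ part of $D_i$, and throwing away $C_i\ge0$ and the leftover nonnegative pieces, one is left with
\[A_i+B_i+C_i+D_i-E_i\ \ge\ \frac{2\mathcal{K}\,w_{ii;i}^2}{\|W\|^2\kappa_i}\Big(1-\frac{3\kappa_i^2}{\|W\|^2}\Big)\ \ge\ 0,\]
the last inequality because $\|W\|^2\ge\kappa_1^2\ge3\kappa_i^2$ under the hypothesis $\sqrt3\kappa_i\le\kappa_1$.

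The main (really, the only) point needing care is the matching of constants: the weight in $(a+b)^2\le3a^2+\tfrac32 b^2$ is chosen so that $2\cdot\tfrac32=3$ equals the coefficient $2+1$ that $B_i$ and $D_i$ provide on the modes $w_{jj;i}$, $j\ne i$, and the residual coefficient of $w_{ii;i}^2$ then becomes nonnegative exactly when $\|W\|^2\ge3\kappa_i^2$, i.e. $\sqrt3\kappa_i\le\|W\|$, for which $\sqrt3\kappa_i\le\kappa_1$ is sufficient. Everything else is a routine expansion in the diagonalizing frame; one could even use $C_i$ and the discarded squares to lower the threshold below $\sqrt3$, but that is not needed.
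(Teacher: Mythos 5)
Your proof is correct. The paper disposes of this lemma by splitting it into two independent claims: $A_i\geq 0$, which follows from \eqref{key ineq} with $l=1$ (since $\sigma_1^{pp,qq}=0$), and $B_i+C_i+D_i-E_i\geq 0$, which is quoted verbatim from \cite[Lemma 4.2]{Guan} (their Equ.~(4.10) onward). You instead give a self-contained computation in the diagonalizing frame, and — this is the genuine difference — you use $A_i$ \emph{actively}: the piece $\tfrac{\kappa_i\mathcal{K}}{\|W\|^2}\sum_p w_{pp;i}^2/\kappa_p^2$ of $A_i$ contributes an extra copy of $\tfrac{\mathcal{K}}{\|W\|^2\kappa_i}w_{ii;i}^2$ which, together with the $j=i$ part of $D_i$, absorbs the diagonal contribution $\tfrac{6\kappa_i\mathcal{K}}{\|W\|^4}w_{ii;i}^2$ of $E_i$ under the hypothesis $\|W\|^2\geq\kappa_1^2\geq 3\kappa_i^2$; the off-diagonal part of $E_i$ is exactly cancelled by $B_i$ plus the off-diagonal part of $D_i$, and $C_i$ is discarded. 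All the algebra checks out ($\mathcal{K}^{jj,ii}=\mathcal{K}/(\kappa_i\kappa_j)$, the weighted Cauchy--Schwarz $(a+b)^2\leq 3a^2+\tfrac32 b^2$, and $\sum_{j\neq i}\kappa_j^2\leq\|W\|^2$). What Guan--Ren--Wang's argument buys is the stronger statement that $B_i+C_i+D_i-E_i\geq 0$ alone (without help from $A_i$), for which they must exploit the $1/\kappa_j$ weights in $C_i$; what your argument buys is transparency and independence from the external reference, at the cost of proving only the combined inequality. One small caveat: your parenthetical that the formula for $A_i$ could "alternatively" be obtained from \eqref{key ineq} with $l=1$ is misleading — that inequality only yields $A_i\geq 0$ and would lose exactly the summand $\sum_p w_{pp;i}^2/\kappa_p^2$ your absorption requires — but since you derive the exact identity directly, this does not affect the proof.
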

\begin{proof}
Note that from (\ref{key ineq}) with $l=1$, it follows that $A_i\geq 0$ since $\sigma_1^{pp,qq} = 0$. The proof of that $B_i+C_i+D_i-E_i\geq 0$ can literally be taken from \cite[Lemma 4.2]{Guan}, starting with \cite[Equ.~(4.10)]{Guan}.
\end{proof}

In the following proof we will write $\sigma_n=\mathcal{K}$ for a better comparability with \cite[Lemma~4.3]{Guan}. Also denote by $\sigma_k(\kappa|i)$ the $k$-th elementary symmetric polynomial in the variables $\kappa_1,\dots,\kappa_{i-1},\kappa_{i+1},\dots,\kappa_n$ and $\sigma_k(\kappa|ij)$ accordingly.

\begin{lemma}\label{app3}
For $\lambda=1,\ldots,n-1$ suppose there exists some $\delta\leq 1$ such that $\kappa_{\lambda}/\kappa_1\geq \delta.$ There exists a sufficiently small positive constant $\delta'$ depending on $\delta$, $\epsilon$ and the bounds for $\mathcal{K}$, such that if $\kappa_{\lambda+1}/\kappa_1\leq \delta',$ we have
\[A_i+B_i+C_i+D_i-E_i\geq 0\quad \text{for}~i=1,\ldots,\lambda.\]
\end{lemma}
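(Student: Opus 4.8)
The plan is to adapt, essentially line by line, the argument of \cite[Lemma~4.3]{Guan} to our setting, the only structural difference being that our term $A_i$ carries the constant $2-\varepsilon$ rather than a large constant $K$, together with the factor $\mathcal{K}^{-1}$ in the denominator; the key inequality \eqref{key ineq} is precisely what compensates for this, so the proof strategy is unchanged. First I would proceed by induction on $\lambda$, or rather treat a fixed $\lambda$ with the hypothesis $\kappa_\lambda/\kappa_1\ge\delta$; note that Lemma \ref{app2} is the base case $\lambda=1$ (with $\delta=1$, $\sqrt{3}=1/\delta'$ there). For the inductive step one splits the index set $\{1,\dots,\lambda\}$ and, for each such $i$, separately estimates the curvature-derivative quantities. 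The estimate for $B_i+C_i+D_i-E_i$ is the genuinely combinatorial part: following \cite[(4.10)--(4.27)]{Guan} one uses the Codazzi relations to rewrite the gradient terms $w_{jj;i}$, applies Cauchy--Schwarz to absorb the $E_i$ term (which is the ``bad'' negative term coming from differentiating $\log\|W\|^2$ twice), and crucially uses that for indices $j$ with $\kappa_j/\kappa_1$ small, the coefficient $\mathcal{K}^{jj}=\sigma_{n-1}(\kappa|j)$ is comparatively large, which makes $C_i$ and $D_i$ dominant; the smallness of $\delta'$ is chosen exactly so that the pinched directions $\kappa_{\lambda+1},\dots$ contribute negligibly. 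For $A_i$ one invokes \eqref{key ineq}, this time with $l=\lambda$ (not $l=1$ as in Lemma \ref{app2}), so that the surviving ``good'' term $\frac{\mathcal{K}((\sigma_\lambda)_{;i})^2}{\sigma_\lambda^2}$ together with $-\frac{\mathcal{K}\sigma_\lambda^{pp,qq}w_{pp;i}w_{qq;i}}{\sigma_\lambda}$ can be re-expressed in terms of $\sigma_\lambda$-derivatives and then matched against the remaining terms using the hypothesis $\kappa_\lambda\ge\delta\kappa_1$.

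The order of steps I would carry out: (i) fix $\lambda$, $i\le\lambda$, pick normal coordinates diagonalizing $W$ at the point, and record $\mathcal{K}^{pp}=\sigma_{n-1}(\kappa|p)$, $\mathcal{K}^{pp,qq}=\sigma_{n-2}(\kappa|pq)$ for $p\ne q$ and $0$ for $p=q$; (ii) apply \eqref{key ineq} with $l=\lambda$ to bound $A_i$ below by a nonnegative combination of a $\sigma_\lambda$-gradient term and a $\sigma_\lambda$-Hessian-type term; (iii) reproduce the chain of inequalities of \cite[Lemma~4.3]{Guan} for $B_i+C_i+D_i-E_i$, carefully checking that every place where \cite{Guan} used the largeness of their constant $K$ is in fact a place where we instead use \eqref{key ineq} or the extra factor gained from the inverse-curvature structure (this is the point flagged in the paragraph preceding Lemma \ref{app2}, and the remark that ``the proofs of \cite[Lemma~4.2, Lemma~4.3]{Guan} also work for sufficiently small $\varepsilon$''); (iv) combine, choosing $\delta'=\delta'(\delta,\varepsilon,\inf\mathcal{K},\sup\mathcal{K})$ small enough that all error terms proportional to $\kappa_{\lambda+1}/\kappa_1\le\delta'$ are dominated, yielding $A_i+B_i+C_i+D_i-E_i\ge 0$.

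The main obstacle, as in \cite{Guan}, is step (iii): the inequality $B_i+C_i+D_i-E_i\ge0$ is not a soft consequence of convexity but requires the precise bookkeeping of how the Newton-inequality coefficients $\sigma_{n-1}(\kappa|j)$, $\sigma_{n-2}(\kappa|ij)$ interact with the pinching hypothesis, and one must verify that the only arithmetic that changes relative to \cite{Guan} is confined to the $A_i$ contribution. I expect the cleanest route is simply to cite \cite[Lemma~4.3]{Guan} for the $B_i+C_i+D_i-E_i\ge0$ portion verbatim --- as was already done for Lemma \ref{app2} via \cite[Lemma~4.2]{Guan} --- and to supply only the modified treatment of $A_i$ through \eqref{key ineq}, since that is the sole genuinely new ingredient. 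If a self-contained argument is wanted instead, one should present the estimate in the form: group the indices $j$ into ``large'' ($\kappa_j\ge\delta'\kappa_1$, at most finitely many, among $1,\dots,\lambda$) and ``small'' ($\kappa_j<\delta'\kappa_1$), bound the small-index gradient contributions to $E_i$ by $C_i+D_i$ using that $\mathcal{K}^{jj}$ is then bounded below by a definite multiple of $\mathcal{K}/\kappa_1$, and close the remaining finite system by Cauchy--Schwarz with the slack provided by the strictly-greater-than-one constant in \eqref{key ineq}.
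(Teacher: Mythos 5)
Your overall outline matches the paper's proof: adapt \cite[Lemma~4.3]{Guan}, invoke \eqref{key ineq} with $l=\lambda$ for $A_i$, and choose $\delta'$ small so that the pinched directions $j>\lambda$ come with the large coefficients $\sigma_{n-1}(\kappa|j)=\sigma_n/\kappa_j\geq\sigma_n/(\delta'\kappa_1)$. However, your preferred ``cleanest route'' --- citing Guan for the portion $B_i+C_i+D_i-E_i\geq 0$ verbatim and supplying only a separate nonnegativity argument for $A_i$ --- rests on a false premise and would fail. For the indices $i\leq\lambda$ treated in this lemma (where $\kappa_i\geq\delta\kappa_1$, so the hypothesis of Lemma \ref{app2} is exactly \emph{not} available), the combination $B_i+C_i+D_i-E_i$ is genuinely not nonnegative: the Cauchy--Schwarz absorption of $E_i$ leaves an uncancelled deficit coming from the diagonal derivative $w_{ii;i}$, since $\mathcal{K}^{ii,ii}=0$ kills the corresponding contribution to $B_i$ and $C_i$ excludes $j=i$. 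What one actually extracts from \cite[(4.16),(4.17)]{Guan} is
\begin{equation*}
\|W\|^4\bigl(B_i+C_i+D_i-E_i\bigr)\geq \|W\|^2\sum_{j\neq i}\sigma_{n-1}(\kappa|j)\,w_{jj;i}^2-w_{ii}^2\sigma_n^{ii}w_{ii;i}^2,
\end{equation*}
and the whole point of the lemma is that the negative term $-w_{ii}^2\sigma_n^{ii}w_{ii;i}^2$ must be cancelled by the positive part of the lower bound for $\|W\|^4A_i$ produced by \eqref{key ineq} with $l=\lambda$ (namely $w_{ii}^2\sigma_n^{ii}\sum_{a\leq\lambda}w_{aa;i}^2$, which contains $w_{ii;i}^2$ because $i\leq\lambda$), at the cost of error terms $-C_\epsilon w_{ii}\delta^{-2}\sum_{a>\lambda}w_{aa;i}^2$ that are then absorbed by the large coefficients $\sigma_{n-1}(\kappa|j)$ for $j>\lambda$ once $\delta'$ is small. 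So $A_i$ and $B_i+C_i+D_i-E_i$ cannot be decoupled; the cancellation between them is the heart of the argument, not a refinement of it.

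Your closing ``self-contained'' sketch does gesture at the right mechanism (the slack $1+\tfrac{1-\varepsilon}{(n-1)(2-\varepsilon)}>1$ in \eqref{key ineq} is indeed what makes the cancellation work, and this is where the restriction $0<\varepsilon<1$ enters), so the fix is to abandon the decoupled citation strategy and carry out the coupled estimate. Two smaller inaccuracies: Lemma \ref{app2} is not the base case of an induction inside this lemma --- it handles the complementary \emph{small} indices $i$ with $\sqrt{3}\kappa_i\leq\kappa_1$, and the induction over $\lambda$ lives in Corollary \ref{Alternative}, while the present lemma splits only into the two cases $\lambda=1$ (via \cite[(4.19),(4.20)]{Guan}) and $\lambda>1$ (via \cite[(4.22)--(4.27)]{Guan}); and the identification $\sqrt{3}=1/\delta'$ is not how the constants are related.
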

\begin{proof}
This corresponds to \cite[Lemma~4.3]{Guan}. We highlight the main estimates in this proof. First of all, from \cite[Equ.~(4.16), (4.17)]{Guan} one can extract the following estimate:
\begin{align}\begin{split}\label{app3-1}
\|W\|^4(B_i+C_i+D_i-E_i)&\geq \|W\|^2\sum_{j\neq i} \left(\sigma_{n-1}(\kappa|j)-2\sigma_{n-1}(\kappa|ij)\right)w_{jj;i}^2\\
				&\hphantom{=}-w_{ii}^2\sigma_{n}^{ii}w_{ii;i}^2 \\
&= \|W\|^2\sum_{j\neq i} \sigma_{n-1}(\kappa|j) w_{jj;i}^2 - w_{ii}^2\sigma_{n}^{ii}w_{ii;i}^2, \\
\end{split}\end{align}
since $\sigma_{n-1}(\kappa|ij) = 0$.

Now we show the right hand side of \eqref{app3-1} is dominated by $\|W\|^4A_i$. From \eqref{key ineq} we get for all $1\leq \lambda<n$ and for all $1\leq i\leq n\colon$
\begin{align}\begin{split}\label{}
A_i&=\frac{(2-\varepsilon) w_{ii}}{\|W\|^2\sigma_n}((\sigma_n)_{;i})^2-\frac{w_{ii}}{\|W\|^2}\sum_{p,q}\sigma_n^{pp,qq}w_{pp;i}w_{qq;i}\\
		&\geq \frac{w_{ii}}{\|W\|^2}\left(1+\frac{1-\varepsilon}{(n-1)(2-\varepsilon)}\right)\frac{\sigma_n((\sigma_{\lambda})_{;i})^2}{\sigma_{\lambda}^2}\\
					&\hphantom{=}-\frac{w_{ii}}{\|W\|^2}\frac{\sigma_n\sum_{p,q}\sigma_{\lambda}^{pp,qq}w_{pp;i}w_{qq;i}}{\sigma_{\lambda}}\\
                    &=\frac{w_{ii}\sigma_n}{\|W\|^2\sigma_{\lambda}^2}\Big[\left(1+\frac{1-\varepsilon}{(n-1)(2-\varepsilon)}\right)\sum_{a}\left(\sigma_{\lambda}^{aa}w_{aa;i}\right)^2\\
                    	&\hphantom{\frac{w_{ii}\sigma_n}{\|W\|^2\sigma_{\lambda}^2}\Big(}+\frac{1-\varepsilon}{(n-1)(2-\varepsilon)}\sum_{a\neq b}\sigma_\lambda^{aa}\sigma_{\lambda}^{bb}w_{aa;i}w_{bb;i}\\
                        &\hphantom{\frac{w_{ii}\sigma_n}{\|W\|^2\sigma_{\lambda}^2}\Big(}+\sum_{a\neq b}\left(\sigma_{\lambda}^{aa}\sigma_{\lambda}^{bb}-\sigma_{\lambda}\sigma_{\lambda}^{aa,bb}\right)w_{aa;i}w_{bb;i}\Big].
\end{split}\end{align}

For sufficiently small $\delta'$ and $\lambda=1$ the simple estimates \cite[Equ.~(4.19), (4.20)]{Guan} give
\begin{equation}
\|W\|^4A_i\geq w_{ii}^2\sigma_n^{ii}w^{2}_{11;i}-C_{\epsilon}w_{ii}\sum_{a\neq 1}w_{aa;i}^2.
\end{equation}

Combining this with \eqref{app3-1} for $i=1$ yields,
\begin{align}\begin{split}\label{app3-2}
\|W\|^2(A_1+B_1+C_1+D_1-E_1)&\geq \sum_{j\neq 1} \sigma_{n-1}(\kappa|j)w_{jj;1}^2-\frac{C_{\epsilon}}{w_{11}}\sum_{j\neq 1}w_{jj;1}^2\\
		&=\sum_{j\neq 1}\left(\frac{\sigma_n}{w_{jj}}-\frac{C_{\epsilon}}{w_{11}}\right)w_{jj;1}^2\\
        &\geq \sum_{j\neq 1}\left(\frac{\sigma_n}{\delta' w_{11}}-\frac{C_{\epsilon}}{w_{11}}\right)w_{jj;1}^2,
\end{split}\end{align}
which is non-negative for $\delta'$ sufficiently small. Hence the lemma is true in the case $\lambda=1.$

For $\lambda>1$ the series of elementary estimates \cite[Equ.~(4.22)-(4.27)]{Guan} gives
\[
\|W\|^4A_i\geq w_{ii}^2\sigma_n^{ii}\sum_{a\leq\lambda}w_{aa;i}^2-\frac{w_{ii}C_{\epsilon}}{\delta^2}\sum_{a>\lambda}w_{aa;i}^2,
\]
after having adapted $\epsilon$ if necessary and having chosen $\delta'$ sufficiently small
again. Combining this last inequality with \eqref{app3-1} for $1\leq i\leq \lambda$ yields
\begin{align}\begin{split}
\|W\|^2(A_i+B_i+C_i+D_i-E_i)&\geq \sum_{j\neq i}\sigma_{n-1}(\kappa|j)w_{jj;i}^2-\frac{C_{\epsilon}}{w_{ii}\delta^2}\sum_{j>\lambda}w_{jj;i}^2\\
			&\geq \sum_{j>\lambda}\left(\sigma_{n-1}(\kappa|j)-\frac{C_{\epsilon}}{w_{ii}\delta^2}\right)w_{jj;i}^2\\
                        &\geq \sum_{j>\lambda}\left(\frac{\sigma_n}{w_{11}\delta'}-\frac{C_{\epsilon}}{w_{ii}\delta^2}\right)w_{jj;i}^2,
\end{split}\end{align}
which is non-negative for small $\delta'$ for the same reason as in \eqref{app3-2}. This completes the proof.
\end{proof}

\begin{corollary}\label{Alternative}
There exist positive numbers $\delta_2,\dots,\delta_n,$ depending only on the dimension, on $\epsilon$ and on the bounds for the Gauss curvature, such that either
\begin{equation}
\kappa_i > \delta_i\kappa_1\quad\forall 2\leq i\leq n
\end{equation}
or
\begin{equation}
A_i+B_i+C_i+D_i-E_i\geq 0\quad\forall 1\leq i\leq n.
\end{equation}
\end{corollary}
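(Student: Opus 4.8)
The plan is to run the Guan--Ren--Wang dichotomy as a finite induction on the index, feeding Lemmas \ref{app2} and \ref{app3} into it. Recall first that at the maximum point $(x_0,t_0)$ appearing in the proof of Lemma \ref{lem: final} we arranged $w_{11}=\max_i w_{ii}$; relabel the directions $2,\dots,n$ so that $\kappa_1\geq\kappa_2\geq\cdots\geq\kappa_n$. Next I would define the thresholds recursively. Set $\delta_1:=1$. Assuming $\delta_{k-1}\in(0,1]$ has been constructed for some $2\leq k\leq n$, apply Lemma \ref{app3} with $\lambda=k-1$ and $\delta=\delta_{k-1}$ to obtain the constant $\delta'$ it produces (which depends only on $\delta_{k-1}$, on $\epsilon$, and on the bounds for $\mathcal{K}$ furnished by Lemmas \ref{lem: lower} and \ref{lem: upper}), and put $\delta_k:=\min\{\delta',1/\sqrt3\}$. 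An immediate induction shows that every $\delta_k$ depends only on the dimension, on $\epsilon$, and on the Gauss curvature bounds, as claimed.

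Now for the dichotomy. Either $\kappa_i>\delta_i\kappa_1$ for all $2\leq i\leq n$, and then the first alternative holds and we are done; or there is a least index $k\in\{2,\dots,n\}$ with $\kappa_k\leq\delta_k\kappa_1$. In the second case minimality of $k$ gives $\kappa_{k-1}>\delta_{k-1}\kappa_1$ (with the convention $\delta_1=1$ when $k=2$), so Lemma \ref{app3} with $\lambda=k-1$, $\delta=\delta_{k-1}$ applies: since $\kappa_k/\kappa_1\leq\delta_k\leq\delta'$ we conclude $A_i+B_i+C_i+D_i-E_i\geq 0$ for $i=1,\dots,k-1$. For the remaining indices $i\geq k$ we use the ordering: $\kappa_i\leq\kappa_k\leq\delta_k\kappa_1\leq\kappa_1/\sqrt3$, hence $\sqrt3\,\kappa_i\leq\kappa_1$ and Lemma \ref{app2} gives $A_i+B_i+C_i+D_i-E_i\geq 0$ as well. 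Thus the second alternative holds for every $1\leq i\leq n$.

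The argument involves no new estimate beyond Lemmas \ref{app2} and \ref{app3}; the only point requiring care is the bookkeeping. One must make sure the recursion defining the $\delta_k$ is well-posed --- each step uses only the previously built $\delta_{k-1}$, which is exactly the quantity Lemma \ref{app3} needs as its hypothesis $\kappa_\lambda/\kappa_1\geq\delta$ --- and one must truncate each $\delta_k$ at $1/\sqrt3$ so that, once the induction stops at level $k$, all the ``tail'' indices $i\geq k$ can be handled uniformly by the $\sqrt3$-pinching criterion of Lemma \ref{app2}. I do not anticipate any essential difficulty here, the substantive work having been done in Lemmas \ref{app2} and \ref{app3}.
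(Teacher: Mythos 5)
Your proposal is correct and follows essentially the same route as the paper: the same recursive construction $\delta_k=\min\{\delta',1/\sqrt3\}$ feeding Lemma \ref{app3} (applied with $\lambda=k-1$, $\delta=\delta_{k-1}$) for the indices $i\le k-1$ and Lemma \ref{app2} for the tail $i\ge k$. The only difference is cosmetic — you phrase the induction as a ``least failing index'' argument rather than the paper's explicit step-by-step induction — and the bookkeeping (in particular that minimality of $k$ supplies exactly the hypothesis $\kappa_{k-1}/\kappa_1\ge\delta_{k-1}$ needed by Lemma \ref{app3}) checks out.
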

\begin{proof}
Choosing $\lambda=1$ and $\delta=1$ in Lemma \ref{app3} yields the existence of $\delta'$ with the following property: if $\kappa_2/\kappa_1\leq \delta'$, then
\[A_1+B_1+C_1+D_1-E_1\geq 0.\]
Note that $\kappa_i\leq \kappa_2$ for $i\geq 2.$ Choose $\delta_2=\min\{\delta',1/\sqrt{3}\}$. Therefore, in view of Lemma \ref{app2}, $\kappa_2/\kappa_1\leq \delta_2$ implies that \[A_i+B_i+C_i+D_i-E_i\geq 0\quad \forall i\geq 2.\]

We now apply induction, assuming we have constructed $\delta_2,\ldots, \delta_j$. We may assume $\kappa_i > \delta_i \kappa_1$ for $2\leq i \leq j$ otherwise $A_i+B_i+C_i+D_i-E_i\geq 0$ is already true for $2\leq i \leq n$. Choose $\delta = \delta_j$ and $\lambda=j$ in Lemma \ref{app3} to get a $\delta'$ so that if $\kappa_{j+1} \leq \delta' \kappa_1$, then  $A_i+B_i+C_i+D_i-E_i\geq 0$ holds for $1\leq i \leq j$. Now in view of Lemma \ref{app2}, taking $\delta_{j+1} = \min \{\delta', 1/\sqrt{3}\}$ gives $A_i+B_i+C_i+D_i-E_i\geq 0$ for $j \leq i \leq n$.
\end{proof}
\bibliographystyle{amsplain}

\end{document}